\newcommand\C{{\mathbb C}}
\newcommand\Q{{\mathbb Q}}
\newcommand\R{{\mathbb R}}
\newcommand\Z{{\mathbb Z}}
\newcommand\al{\alpha}
\newcommand\be{\beta}
\newcommand\ga{\gamma}
\newcommand\eps{\epsilon}
\newcommand\PP{{\bf P}}
\newcommand\rA{\mathscr{A}}
\newcommand\B{\mathscr{B}}
\newcommand\D{\mathscr{D}}
\newcommand\E{\mathscr{E}}
\newcommand\rP{\mathscr{P}}
\newtheorem{theorem}{Theorem}[section]
\newtheorem{lemma}[theorem]{Lemma}
\newtheorem{corollary}[theorem]{Corollary}
\theoremstyle{definition}
\theoremstyle{remark}
\numberwithin{equation}{section}
\def\({\left(}
\def\){\right)}
\def\[{\left[}
\def\]{\right]}
\def\<{\langle}
\def\>{\rangle}
\begin{document}

\title[Positive density of integer polynomials]{Positive density of integer polynomials with some prescribed properties}

\author{Art\= uras Dubickas}
\address{Department of Mathematics and Informatics, Vilnius University, Naugarduko 24,
LT-03225 Vilnius, Lithuania}
\email{arturas.dubickas@mif.vu.lt}

\author{Min Sha}
\address{School of Mathematics and Statistics, University of New South Wales,
 Sydney, NSW 2052, Australia}
\email{shamin2010@gmail.com}

\subjclass[2010]{Primary 11C08; Secondary 11B37}



\keywords{Integer polynomial, dominant polynomial, polynomial root, positive density}

\begin{abstract}
In this paper, we show that various kinds of integer polynomials with prescribed properties of their roots have positive density. For example, we prove that almost all integer polynomials have exactly one or two roots with maximal modulus. We also show that for any positive integer $n$ and any set of $n$ distinct points symmetric with respect to the real line, there is a positive density of integer polynomials of degree $n$, height at most
$H$ and Galois group $S_n$ whose roots are close to the given $n$ points.
\end{abstract}

\maketitle



\section{Introduction}

The so-called \textit{fundamental theorem of algebra}, first proved by Gauss, asserts that every non-constant univariate polynomial with complex coefficients has at least one complex root.  Polynomials and various properties of their roots often play a crucial role in studying many mathematical objects.

For example, the properties of linear recurrence sequences (LRS) rely heavily on their characteristic polynomials.
Recall that a LRS $\{u_n\}_{n=0}^{\infty}$ of order $n\ge 2$ with elements in $\C$ is defined by the linear relation
$$
u_{m+n}=a_1u_{m+n-1}+\cdots+a_nu_m \quad (m=0,1,2,\dots),
$$
where $a_1,\dots,a_n\in \C$ (called the \textit{coefficients}), $a_n\ne 0$ and $u_j \ne 0$ for at least one
 $j$ in the range $0 \le j \le n-1$.
The characteristic polynomial of this LRS is
$$
f(X)=X^n-a_1X^{n-1}-\cdots-a_n =\prod_{i=1}^{n} (X-\al_i) \in \C[X],
$$
where all $\al_i$ are said to be the \textit{characteristic roots} of the sequence $\{u_n\}_{n=0}^{\infty}$. The sequence $\{u_n\}_{n=0}^{\infty}$ is called \textit{non-degenerate} if the quotient of any of its two distinct characteristic roots is not a root of unity. Then, there are only finitely many integers $n$ with $u_n=0$ if the sequence is non-degenerate; otherwise, there may be infinitely many such integers. It has been proved in \cite{DS2014a} that almost all LRS with integer (or rational) coefficients are non-degenerate.

In this paper, a polynomial is called \textit{dominant} if it has a simple root (called \textit{dominant root}) whose modulus is strictly greater than the moduli of its remaining roots.
The LRS with dominant characteristic polynomial (called \textit{dominant LRS}) are often much easier to deal with, especially when considering Diophantine properties of linear recurrence sequences.

In order to interpret the meanings of ``almost all'' and ``positive density'', we introduce the following definition, which only shows the right way to comprehend ``almost all'' and ``positive density'', but does not explain all their meanings in this paper.
Given a proposition $\PP$ related to integer polynomials, for integers $n\ge 1$ and $H\ge 1$, we define the set
\begin{equation*}
\begin{split}
\rP_n(H) =  \{f(X) = &X^n +a_1X^{n-1} + \cdots + a_n\in \Z[X]~:\\
&  |a_i| \le H, \ i =1, \ldots, n\}.
\end{split}
\end{equation*}
We say that {\bf P} is true \emph{for almost all} (or \textit{with density tending to $1$}) monic integer polynomials
if for any integer $n\ge 1$ we have
$$
\lim_{H\to \infty} \frac{|\{f\in \rP_n(H): \textrm{{\bf P} is true for $f$}\}|}{|\rP_n(H)|}=1.
$$
Throughout, we use $|T|$ to denote the cardinality of a finite set $T$.
In addition, we say that {\bf P} is true \textit{with positive density}
if for any integer $n\ge 1$ the following holds:
$$
\limsup_{H\to \infty} \frac{|\{f\in \rP_n(H): \textrm{{\bf P} is true for $f$}\}|}{|\rP_n(H)|}>0,
$$
which is equivalent to the following
$$
H^n \ll |\{f\in \rP_n(H): \textrm{{\bf P} is true for $f$}\}| \ll H^n.
$$
In the same way we will also handle all integer polynomials (not ne\-cessarily monic), where the term $H^n$ will be replaced by $H^{n+1}$.

It has been shown in \cite{DS2014b} that almost all monic integer polynomials are dominant \cite[Theorem 1.1]{DS2014b}, but it is not true for general integer polynomials (not necessarily monic) \cite[Theorem 1.4]{DS2014b}. That is, not almost all integer polynomials (not necessarily monic) have a dominant root.
Therefore, it is natural to ask which kind of integer polynomials we should add to the set of dominant integer polynomials to make their density tend to one. The answer is ``those
with exactly two maximal complex conjugate roots".
In Section \ref{sec:modulus}, we show that almost all integer polynomials have at most two roots
with maximal modulus. Furthermore,
unlike in the monic case,
``at most two" cannot be replaced by ``one" or ``two". In particular, this result has been used in \cite{Sha} to show that for almost all LRS of algebraic numbers with rational coefficients, the \emph{Skolem problem}, which is to determine whether a given LRS has a zero term, is decidable with an explicit complexity. Besides, the result also gives a strong evidence that at least half of integer polynomials are dominant.

More precisely, put $A_n^*(k,H)$ for the cardinality of the set of integer polynomials of degree $n$ and height at most $H$ with exactly $k$ roots of maximal modulus.
The next theorem shows that the proportion of polynomials with at most two roots of maximal modulus  ($A_n^*(1,H)+A_n^*(2,H)$) among all integer polynomials of degree $n$ and height at most $H$ (there are $2H(2H+1)^n$ of them) tends to $1$
as $H \to \infty$. Namely,
\begin{theorem}\label{vienas}
For any integer $n\ge 2$, we have
$$
\lim_{H \to \infty} \frac{A_n^*(1,H)+A_n^*(2,H)}{2H(2H+1)^{n}} = 1.
$$
\end{theorem}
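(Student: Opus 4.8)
The plan is to show that the exceptional set is negligible. Write $B_n(H)$ for the number of $f=a_0X^n+\cdots+a_n\in\Z[X]$ with $a_0\ne0$, $|a_i|\le H$ for all $i$, and at least three roots of maximal modulus. The goal is to prove $B_n(H)=O_n(H^n)$. Since every degree-$n$ polynomial (with $n\ge2$) has at least one root of maximal modulus, we have
$$
A_n^*(1,H)+A_n^*(2,H)=2H(2H+1)^n-\sum_{k\ge3}A_n^*(k,H)=2H(2H+1)^n-B_n(H),
$$
so the bound $B_n(H)=O_n(H^n)$, divided by $2H(2H+1)^n$, yields the theorem. Throughout I would use the elementary fact that a nonzero polynomial of degree $d$ in $n+1$ variables has $O_{d,n}(H^n)$ zeros in $\{-H,\dots,H\}^{n+1}$ (induct on the number of variables).

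First I would discard the polynomials with a repeated root: they satisfy $\mathrm{disc}(f)=0$, and since the discriminant is a nonzero polynomial in $a_0,\dots,a_n$, there are only $O_n(H^n)$ of them. Assume from now on that $f$ has simple roots $\alpha_1,\dots,\alpha_n$, of which $m\ge3$ have maximal modulus $R$. Write $r\in\{0,1,2\}$ for the number of real roots among those $m$ and $c$ for the number of complex-conjugate pairs among them, so $m=r+2c$; if $r+c\le1$ then $m\le2(r+c)\le2$, a contradiction, whence $r+c\ge2$.

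The main device is the auxiliary polynomial
$$
\widehat G(Y)=\prod_{1\le i\le j\le n}(Y-\alpha_i\alpha_j),
$$
which is invariant under permutations of the roots, hence has coefficients that are polynomials in $a_1/a_0,\dots,a_n/a_0$; clearing denominators, $a_0^{N}\widehat G(Y)\in\Z[a_0,\dots,a_n][Y]$ for a suitable $N=N(n)$, with leading coefficient $a_0^N$ and $Y$-degree $\binom{n+1}{2}$. I claim $R^2$ is a root of $\widehat G$ of multiplicity at least $2$. Indeed, $\alpha_i\alpha_j=R^2$ forces $|\alpha_i|\,|\alpha_j|=R^2$, hence $|\alpha_i|=|\alpha_j|=R$ since $R$ is maximal, and then $\alpha_j=R^2/\alpha_i=\bar\alpha_i$; conversely each maximal-modulus root $\alpha$ contributes the factor $Y-\alpha\bar\alpha=Y-R^2$. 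A short case check (a real maximal root equals $\pm R$ and pairs only with itself; the two roots of a maximal conjugate pair pair only with each other, and distinct maximal conjugate pairs do not pair) shows that the number of pairs $i\le j$ with $\alpha_i\alpha_j=R^2$ is exactly $r+c\ge2$. Hence $\mathrm{disc}_Y(a_0^N\widehat G)$, regarded as a polynomial $D\in\Z[a_0,\dots,a_n]$, vanishes at every such $f$.

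It remains to check that $D\not\equiv0$, which is precisely where taking the product over pairs $i\le j$ — rather than over all ordered pairs, which would force spurious double roots from $\alpha_i\alpha_j=\alpha_j\alpha_i$ — is essential. It suffices to exhibit one polynomial $f$ of degree $n$ all of whose root-products $\alpha_i\alpha_j$ with $i\le j$ are pairwise distinct, so that $\widehat G$ is separable; for instance $f(X)=\prod_{i=1}^n\bigl(X-2^{3^i}\bigr)$ works, because $3^i+3^j$ determines the multiset $\{i,j\}$. Then $\{D=0\}$ is a proper hypersurface in $\A^{n+1}$, contributing $O_n(H^n)$ integer points, and consequently
$$
B_n(H)\le\#\{f:\mathrm{disc}(f)=0\}+\#\{f:D(a_0,\dots,a_n)=0\}=O_n(H^n),
$$
as required. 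The only genuinely delicate point is the non-vanishing of $D$; everything else is bookkeeping with symmetric functions together with the hypersurface point-count.
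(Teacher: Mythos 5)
Your proposal is correct, and it rests on the same core device as the paper --- namely detecting the coincidence among root products through the vanishing of a discriminant which is a not-identically-zero polynomial in $a_0,\dots,a_n$, and then counting integer points on the resulting hypersurface; this is exactly the mechanism of the paper's Lemma~\ref{lem:pair}. But your surrounding architecture is genuinely leaner. The paper deduces Theorem~\ref{vienas} from Theorem~\ref{thm:moduli3+*}, whose proof first splits off reducible polynomials via Kuba's bound (Lemma~\ref{reducible2}), then treats irreducible polynomials having a real root of maximal modulus via Ferguson's theorem (Lemma~\ref{Ferguson}, forcing $f(X)=g(X^k)$), and only then applies the root-product argument to two conjugate pairs satisfying $\alpha_1\alpha_2=\alpha_3\alpha_4$. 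By taking the product over unordered pairs \emph{including the diagonal}, $\prod_{i\le j}(Y-\alpha_i\alpha_j)$, you absorb the real-maximal-root case into the same discriminant through the factor $Y-\alpha^2=Y-R^2$, so you need neither Ferguson's theorem nor the reducible/irreducible dichotomy; the only preliminary reduction you need is discarding the $O(H^n)$ polynomials with $\mathrm{disc}(f)=0$, and your count of exactly $r+c\ge 2$ factors $Y-R^2$ is right. You also make explicit a point the paper leaves implicit in Lemma~\ref{lem:pair}: the verification that the relevant discriminant is not identically zero as a polynomial in the coefficients, which your example with roots $2^{3^i}$ settles cleanly. What the paper's longer route buys is finer intermediate information (e.g.\ the sharper bounds in the monic case, Theorem~\ref{thm:moduli3+}, and the bound $O(H^{n/3+1})$ for the irreducible real-maximal-root subcase, plus an independent Galois-theoretic proof); for Theorem~\ref{vienas} itself your argument reaches the same $O(H^n)$ exceptional bound with fewer imported results.
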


Although it has been proved in \cite{DS2014b} that almost all monic integer polynomials are dominant, in Section \ref{sec:modulus} we will also determine the growth rate of such monic integer polynomials with more than one root of maximal modulus.
Theorem~\ref{vienas} will be derived from Theorem~\ref{thm:moduli3+*} which estimates the difference between
the quantities $2H(2H+1)^n$ and $A_n^*(1,H)+A_n^*(2,H)$.

As an another example, let us consider number fields. Let $K$ be a number field of degree $n\ge 2$ and generated by $\al$ over $\Q$, and let $f$ be the minimal integer polynomial of $\al$ over $\Z$. If $f$ has exactly $r$ real roots and $2s$ non-real roots ($r\ge 0, s\ge 0$), then it is well known that there are $r+s$ non-isomorphic embeddings of $K$ into $\C$ ($r$ real embeddings and $s$ complex embeddings), and the rank of the unit group of the ring of integers of $K$ is $r+s-1$ (by  Dirichlet's unit theorem).
So, for two given non-negative integers $r,s$ with $n=r+2s\ge 2$, one may investigate the proportion of  number fields of degree $n$ with exactly $r$ real embeddings and $s$ complex embeddings.  We will investigate this problem in the setting of integer polynomials.
In Section \ref{sec:location} we will show that  various kinds of integer polynomials with prescribed locations of their roots have positive density; for instance, integer polynomials of degree $n$ with exactly $r$ real roots and $2s$ non-real roots, where $r+2s=n$.
In some sense, these results  suggest that there are quite many number fields with arbitrary real and complex embeddings and, in particular, their density is positive.

More precisely,
for positive integers $n,H$ and non-negative integers $r,s$ with $n=r+2s$,  let us define the set
\begin{align*}
\D_n^*(r,s;H) = & \{f(X) = a_0X^n + a_1X^{n-1}  +\cdots + a_n \in \Z[X]: H(f) \le H,  \\
& \textrm{$a_0\ne 0$, $f$ has exactly $r$ real roots and $2s$ non-real roots} \}.
\end{align*}
Then, put $D_n^*(r,s;H)=| \D_n^*(r,s;H)|$.
The following result asserts that there are many integer polynomials of degree $n$,  which have $r$ real roots and $2s$ non-real roots, where $r+2s=n$.

\begin{theorem}
\label{thm:rs}
For any integers $n, H\ge 1$ and any non-negative integers $r,s$ such that $n=r+2s$, we have
\begin{align*}
 H^{n+1} \ll D_n^*(r,s;H) \ll H^{n+1}.
\end{align*}
\end{theorem}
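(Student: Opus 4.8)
The plan is to prove the two inequalities separately, the upper bound being trivial and the lower bound being the substantive part. For the upper bound $D_n^*(r,s;H) \ll H^{n+1}$, simply observe that $\D_n^*(r,s;H)$ is a subset of the set of all integer polynomials of degree at most $n$ with height at most $H$, which has cardinality $(2H+1)^{n+1} \ll H^{n+1}$. For the lower bound we must exhibit $\gg H^{n+1}$ polynomials in $\D_n^*(r,s;H)$, and the natural device is to build a fixed polynomial $g(X) \in \Z[X]$ of degree $n$ with exactly $r$ real and $2s$ non-real \emph{simple} roots, all of small modulus, and then perturb it: consider polynomials of the form $f(X) = g(X) + p(X)$ where $p$ ranges over a suitable family of integer polynomials that is both large (of size $\gg H^{n+1}$) and small in sup-norm on a fixed disk, so that Rouch\'e's theorem guarantees $f$ has the same number of roots inside each of $r+s$ disjoint small disks (one around each root of $g$, conjugate disks for conjugate pairs) as $g$ does, hence exactly $r$ real and $2s$ non-real roots.

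The first key step is the construction of $g$: take $g(X) = a_0 \prod_{j=1}^{r}(X - b_j)\prod_{k=1}^{s}\bigl((X-c_k)^2 + d_k^2\bigr)$ with distinct real $b_j$, and real $c_k, d_k$ with $d_k \neq 0$ chosen so that all roots are well-separated and contained in, say, the disk $|z| \le 1/2$; clearing denominators we may take $g \in \Z[X]$ with $\deg g = n$. Fix $\varepsilon>0$ smaller than half the minimal separation of the roots of $g$, and on the boundary circles $C_1,\dots,C_{r+s}$ of radius $\varepsilon$ about the roots (one per real root, one per conjugate pair, the latter automatically paired with its conjugate circle) let $\delta = \min_j \min_{z\in C_j}|g(z)| > 0$. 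The second step is to choose the perturbing family: let $p(X) = c_{\lfloor n/2\rfloor} X^{\lfloor n/2 \rfloor} + \cdots + c_1 X + c_0$ — or more robustly, choose roughly $n/2$ of the coefficient slots, say the bottom half — with each $c_i$ an integer in $[-\lfloor \delta H /(2(n+1)) \rfloor, \lfloor \delta H/(2(n+1))\rfloor]$; on $|z|\le 1$ such a $p$ satisfies $|p(z)| \le (n+1)\cdot \delta H/(2(n+1)) \cdot$ (something) — one adjusts constants so that $|p(z)| < \delta$ on each $C_j$ uniformly, while the number of admissible integer tuples is $\gg H^{\lfloor n/2\rfloor + 1}$. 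That exponent is too small, so in fact one should let \emph{all} $n+1$ coefficients of $p$ vary, each over an interval of length $\gg H$, giving $\gg H^{n+1}$ choices, and absorb the resulting sup-norm bound $|p(z)| \le c H$ on $|z|\le 1$ into the requirement $cH < \delta$ — which fails for large $H$. The resolution, and the third step, is the standard rescaling trick: instead of adding $p(X)$ directly, add $p(X)$ but force its coefficients to be divisible by a large fixed integer $N$, or equivalently work with $g(X) + N^{-1}(\text{stuff})$ — cleaner: replace the perturbation by noting we want $f$ of height $\le H$, so write $f(X) = g(X) + h(X)$ where $h$ has all coefficients in an arithmetic progression and is small; concretely fix the top coefficient and a few others to pin down the leading behaviour and let the remaining coefficients range over integers in an interval of length $\asymp H/M$ for a large constant $M = M(g,\varepsilon)$ chosen so that $|h(z)| < \delta$ on every $C_j$ — this still yields $\gg (H/M)^{n+1} \gg H^{n+1}$ polynomials since $M$ is a constant independent of $H$.

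With the construction in place the verification is routine: for each such $f$, Rouch\'e's theorem applied on each $C_j$ (using $|h(z)| < \delta \le |g(z)|$ there) shows $f$ and $g$ have equally many zeros, counted with multiplicity, inside $C_j$; summing over the $r+s$ circles accounts for all $n$ roots of $f$, so $f$ has exactly one real root near each $b_j$ and exactly one conjugate pair near each $c_k \pm i d_k$ (reality of the nearby root for real $b_j$ follows because complex roots of a real polynomial come in conjugate pairs and the disk about $b_j$ is symmetric about $\R$ and contains only one root). Hence $f \in \D_n^*(r,s;H)$ for all large $H$, giving $D_n^*(r,s;H) \gg H^{n+1}$, and combined with the upper bound this proves the theorem. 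The main obstacle is purely bookkeeping: making the perturbation simultaneously large enough in count ($\gg H^{n+1}$) and small enough in magnitude ($<\delta$ on the circles), which is exactly what the constant-factor rescaling by $M$ achieves, since shrinking the coefficient ranges by a fixed factor costs only a constant in the density bound.
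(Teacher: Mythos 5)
Your overall skeleton (build a real/integer polynomial $g$ with the prescribed root pattern, perturb, and use Rouch\'e's theorem plus conjugate symmetry to preserve the count of real and non-real roots) is the same as the paper's, which proves this via its Lemma on polynomials with roots close to those of a fixed separable real polynomial. The upper bound and the reality argument (a non-real root near a real root of $g$ would bring its conjugate into the same disk) are fine. But the counting step, which is the whole point, has a genuine gap. You correctly notice that letting the perturbation $h$ have coefficients of size $\asymp H$ forces $|h(z)|\le cH$ on the circles, which is incompatible with the Rouch\'e requirement $|h(z)|<\delta$ where $\delta=\delta(g,\varepsilon)$ is a \emph{fixed} constant; your proposed resolution does not actually resolve it. Keeping $g$ fixed and shrinking each coefficient range to length $\asymp H/M$ with $M=M(g,\varepsilon)$ a constant reproduces exactly the same contradiction: you would need $cH/M<\delta$, i.e. $M\gg H$, and then the number of admissible perturbations is $O(1)$, not $\gg H^{n+1}$. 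Indeed, if $|h(z)|<\delta$ on the circles $C_j$, then evaluating at $n+1$ points of these circles and using Lagrange interpolation (or Cauchy estimates on a circle $|z|=\rho$ contained in the region where the bound holds, if one arranges that) shows every coefficient of $h$ is $O_{g,\varepsilon}(\delta)=O(1)$; so \emph{any} scheme that perturbs a fixed, bounded-height $g$ can contribute only $O(1)$ polynomials, never a positive proportion of the $\asymp H^{n+1}$ polynomials of height $\le H$. The divisibility-by-$N$ variant fails for the same reason.

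The missing idea is to let the base polynomial grow with $H$ rather than keeping it fixed: since multiplying a polynomial by a scalar does not move its roots, one should perturb $H\cdot g$ instead of $g$. Concretely (this is the paper's Lemma on this point), write $g(X)=b_0X^n+\cdots+b_n$ with the desired $r$ real and $2s$ non-real simple roots, take $\gamma,\epsilon>0$ from the explicit continuity-of-roots lemma so that any real polynomial with coefficients $\epsilon$-close to those of $g$ has exactly one root in each disk of radius $\gamma$ about each root of $g$, and then count the integer polynomials $f(X)=a_0X^n+\cdots+a_n$ with $a_i\in\bigl(H(b_i-\epsilon),H(b_i+\epsilon)\bigr)$ for all $i$. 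Then $f(X)/H$ is an $\epsilon$-perturbation of $g$, it has the same roots as $f$, so $f$ has exactly $r$ real and $2s$ non-real roots, $H(f)\ll H$, and there are at least $\lfloor 2\epsilon H\rfloor^{n+1}\gg H^{n+1}$ such $f$. In effect the admissible perturbation size scales like $\epsilon H$ because the quantity playing the role of your $\delta$ is homogeneous of degree one in the coefficients and so also scales like $H$; this is exactly the rescaling your write-up was groping for but did not carry out.
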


Recall that van der Waerden \cite{Waerden1936} showed that, for almost all   integer polynomials of degree $n\ge 1$, the Galois group of their splitting fields over $\Q$ is the full symmetric group $S_n$. Theorem~\ref{thm:rs} implies that various placings of roots are
possible for generic polynomials with Galois group $S_n$.

Theorem~\ref{thm:rs} is an immediate corollary of the following more general result:

\begin{theorem}
\label{thm:rs11}
Let $S$ be a set of $n$ distinct points $\be_1,\dots,\be_n$ in the complex plane $\C$ symmetric
with respect to the real line, i. e. $$S=\{\be_1,\dots,\be_n\}=\{\overline{\be_1},\dots,\overline{\be_n}\}.$$
Then, for each sufficiently small positive number $\varepsilon$,
there exists a constant $C=C(\varepsilon,S)>0$ such that the number
of integer polynomials $f$ of degree $n$ and height at most $H$ with
Galois group $S_n$ and
$n$ roots lying $\varepsilon$-neighbourhoods of $\be_1,\dots,\be_n$
respectively (and, in addition, such that the root of each such $f$ lying in the open disc $|z-\be_i|<\varepsilon$ is real if and only if the number $\be_i$ is real) is at least
$C H^{n+1}$.
\end{theorem}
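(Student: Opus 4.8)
The plan is to realize the required polynomials as integer perturbations of the fixed real polynomial
$$
g(X)=\prod_{i=1}^{n}(X-\beta_i)=X^n+c_1X^{n-1}+\cdots+c_n,
$$
which has real coefficients $c_1,\dots,c_n$ because $S$ is stable under complex conjugation, and then to retain a positive proportion of them by imposing finitely many congruence conditions that force the Galois group to be $S_n$. Fix $\varepsilon>0$ small enough that the closed discs $|z-\beta_i|\le\varepsilon$ are pairwise disjoint and that each disc centred at a non-real $\beta_i$ misses $\R$. Put $C_0=1+\max_i|c_i|$, fix a constant $\delta\in(0,1/(4C_0))$, and consider
$$
f(X)=a_0X^n+a_1X^{n-1}+\cdots+a_n,\qquad a_0\in\Z\cap[\delta H,2\delta H],\quad a_i\in\Z,\quad |a_i-a_0c_i|\le\delta' H,
$$
where $\delta'>0$ is a second constant to be chosen small. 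Then $H(f)\le(2\delta C_0+\delta')H\le H$ once $\delta'$ is small, and $|a_i/a_0-c_i|\le\delta'/\delta$, so the coefficient vector of $f/a_0$ lies within $\delta'/\delta$ of that of $g$.

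Since $g$ has the $n$ simple roots $\beta_1,\dots,\beta_n$, Rouch\'e's theorem provides a threshold (depending on $\varepsilon$ and $S$) below which, whenever $\delta'/\delta$ is smaller than it, $f$ has exactly one root in each disc $|z-\beta_i|<\varepsilon$; as $\deg f=n$, these are all the roots of $f$. Because $f$ has real coefficients, the unique root of $f$ in $|z-\beta_i|<\varepsilon$ is real when $\beta_i\in\R$ (its conjugate lies in the same, symmetric, disc, hence equals it) and non-real when $\beta_i\notin\R$ (that disc avoids $\R$), which is exactly the prescribed pattern. The number of admissible coefficient vectors $(a_0,a_1,\dots,a_n)$, and hence of such polynomials $f$, is $\gg H\cdot H^n=H^{n+1}$.

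It remains to keep a positive proportion of these while forcing $\Gal(f)=S_n$; assume $n\ge3$ (for $n\le2$ the same scheme works using only an irreducibility condition). Fix three distinct primes $p_1,p_2,p_3$ with $p_3\ge n$ and restrict to $a_0$ coprime to $p_1p_2p_3$. For each $j$ there is, over $\mathbb F_{p_j}$, a positive proportion of vectors $(a_1,\dots,a_n)\bmod p_j$ for which $a_0X^n+a_1X^{n-1}+\cdots+a_n$ is separable and factors, respectively, as an irreducible of degree $n$, as (linear)$\times$(irreducible of degree $n-1$), and as (irreducible quadratic)$\times$($n-2$ distinct linear factors); these types exist since $p_j$ is prime and $p_3\ge n$. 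By the Chinese remainder theorem a positive proportion of vectors modulo $P:=p_1p_2p_3$ satisfies all three conditions simultaneously, and since the box $\prod_i[a_0c_i-\delta' H,\ a_0c_i+\delta' H]$ has side lengths $\gg P$ for $H$ large, it still contains $\gg H^n$ such integer vectors for each admissible $a_0$; thus $\gg H^{n+1}$ polynomials $f$ remain. For any of them, Dedekind's theorem applies at each $p_j$ (as $p_j\nmid a_0$ and $f\bmod p_j$ is separable, $p_j$ divides neither $a_0$ nor the discriminant of $f$, so $p_j$ is unramified in the splitting field of $f$ and its Frobenius acts on the roots of $f$ with cycle type equal to the degree sequence of $f\bmod p_j$): the condition at $p_1$ makes $f$ irreducible over $\Q$ and contributes an $n$-cycle, so $\Gal(f)$ is transitive; the condition at $p_2$ contributes an $(n-1)$-cycle, which together with transitivity forces $2$-transitivity, hence primitivity; and the condition at $p_3$ contributes a transposition. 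A primitive subgroup of $S_n$ containing a transposition is all of $S_n$, so $\Gal(f)=S_n$, and the number of polynomials counted in the theorem is at least $CH^{n+1}$ for a suitable $C=C(\varepsilon,S)>0$.

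I expect the only delicate point to be organizing the parameters coherently: one first fixes $\varepsilon$ and the primes $p_1,p_2,p_3$ (depending only on $n$ and $S$), then $\delta$ (so the bound $H(f)\le H$ is preserved), then $\delta'$ (so Rouch\'e confines the roots to the $\varepsilon$-discs), and only afterwards carries out the lattice-point count in the box with the congruence side-conditions, all implied constants being allowed to depend on $\varepsilon$ and $S$. The remaining ingredients — continuity of roots via Rouch\'e, counting integer points of a box in prescribed residue classes modulo a fixed modulus, the existence over $\mathbb F_p$ of polynomials of each required cycle type, and the group-theoretic fact that transitivity together with an $(n-1)$-cycle and a transposition generate $S_n$ — are all standard.
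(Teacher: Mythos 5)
Your proposal is correct, and its second half takes a genuinely different route from the paper. The paper proves the theorem in two quoted steps: it applies its Lemma~\ref{keylem} to $h(X)=(X-\beta_1)\cdots(X-\beta_n)$, which already yields $\gg H^{n+1}$ integer polynomials of height at most $H$ with one root in each $\varepsilon$-disc and the prescribed real/non-real pattern, and then it discards the polynomials whose Galois group is not $S_n$ using van der Waerden's bound $E_n^*(H)\ll H^{n+1-1/(6(n-2)\log\log H)}$ (Lemma~\ref{En}), the case $n=2$ being handled via the count of reducible polynomials (Lemma~\ref{reducible2}). Your first half essentially re-derives Lemma~\ref{keylem}: the perturbation of $a_0\prod(X-\beta_i)$ with $a_0\asymp\delta H$ and coefficients in a box of side $\asymp\delta' H$, Rouch\'e's theorem to trap exactly one root per disc, and the conjugation argument for the real/non-real pattern are the same mechanism as the paper's Lemmas~\ref{lem:conroot} and~\ref{keylem} (with the minor cosmetic difference that the paper centers the coefficient box at $H\cdot(b_0,\dots,b_n)$ rather than letting $a_0$ range separately). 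Where you diverge is the Galois-group step: instead of invoking probabilistic Galois theory to show the exceptional set is $o(H^{n+1})$, you impose congruence conditions modulo three fixed primes and use Dedekind's theorem (correctly adapted to non-monic $f$ with $p\nmid a_0$ and $f\bmod p$ separable) together with the classical group-theoretic fact that a transitive subgroup of $S_n$ containing an $(n-1)$-cycle and a transposition is all of $S_n$. Your route is more self-contained and elementary, at the cost of retaining only a positive proportion of the constructed family and of some bookkeeping (choice of primes, CRT, lattice points of a long box in fixed residue classes — all of which you handle or could routinely complete); the paper's route is shorter given the quoted lemmas and shows the stronger fact that within the constructed family the non-$S_n$ polynomials have density zero. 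Both arguments are valid proofs of the stated lower bound $CH^{n+1}$.
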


In some sense Theorem~\ref{thm:rs11} is a non-monic version of an old theorem of Motzkin \cite{motz}; see also \cite{motz1}.

We also want to compare our results with a related result of Akiyama and Peth{\H o} \cite{AP} (see also their previous paper \cite{AP0}).
Let $M_n(r,s;T)$ be the number of monic integer polynomials of degree $n$ and having exactly $r$ real roots and $2s$ non-real roots such that the maximal modulus of the roots is less than $T$.
In \cite[Theorem 3.1]{AP}, it has been proved that
\begin{equation}
\label{eq:AP}
\left| M_n(r,s;T) - v_n^{(s)}T^{n(n+1)/2}\right| \le c_1 T^{n(n+1)/2-1},
\end{equation}
where $v_n^{(s)}$ is some quantity defined in \cite[Section 2]{AP}, and $c_1$ is a constant depending only on $n$ and $s$. This result suggests that such monic integer polynomials have positive density among all the monic integer polynomials. So, in some sense Theorem \ref{thm:rs} generalizes the result to the case of general integer polynomials (not necessarily monic). It would be of interest to obtain an asymptotic formula as \eqref{eq:AP} in our setting as well.

Throughout the paper, we use the Landau symbol $O$ and the Vinogradov symbol $\ll$. Recall that the assertions $U=O(V)$ and $U \ll V$ (sometimes we will write this also as $V \gg U$) are both equivalent to the inequality $|U|\le cV$ with some constant $c>0$.
In this paper, the constants implied in the symbols $O, \ll$ and in the phrase ``up to some constant'' only possibly depend on the degree $n$. Besides, we always assume that $H$ is a positive integer (greater than $1$ if there is the factor $\log H$ in the corresponding formula), and $n$ is a positive integer.

The proofs come from several sources. Some preliminary results and our main lemmas (Lemma~\ref{keylem} and Lemma~\ref{lem:pair}) are given in the next section. The proof of some results related to Theorem~\ref{vienas} and
several similar results are given in
Section~\ref{sec:modulus} which is concluded with two proofs of Theorem~\ref{vienas} itself.
The proof of Theorem~\ref{thm:rs11} is given at the beginning of
Section~\ref{sec:location}. The lower bounds usually come from some series of explicit examples (monic  case) and from Lemma~\ref{keylem} (non-monic case).

\section{Preliminaries}

In this section,  for the convenience of the reader we gather some definitions and give some results which will be used later on.

\subsection{Some useful results about polynomials}

Given a polynomial
$$
f(X)=a_0X^n+a_1X^{n-1}+\cdots+a_n=a_0 (X-\al_1)\cdots (X-\al_n) \in \C[X]
$$
of degree $n\ge 1$, we assume that the roots $\al_1,\dots,\al_n$
(listed with multiplicities)
are labelled so that $|\al_1 | \ge |\al_2| \ge \cdots \ge |\al_n |$. In case $|\al_1|=\cdots =|\al_k |>|\al_{k+1} |$, we say that \textit{$f$ has exactly $k$ roots with maximal modulus}.

For the above polynomial $f(X)$, its {\it height}
is defined by
$$
H(f)=\max_{0 \leq j \leq n} |a_j|,
$$
and its {\it Mahler measure} by
$$
M(f)=|a_0| \prod_{j=1}^n \max\{1,|\al_j|\}.
$$
These two quantities are related by the following well-known inequality
\begin{equation}\label{Mahler}
2^{-n}H(f)  \leq M(f) \leq \sqrt{n+1}H(f),
\end{equation}
for instance, see \cite[(3.12)]{Waldschmidt2000}.

There are various bounds on the moduli of roots of polynomials. Here, we use the one due to Fujiwara \cite{Fujiwara}; for a reference in English see \cite{Batra}.

\begin{lemma}
\label{Fujiwara}
Given a polynomial $f(X)=a_0X^n+\cdots+a_n \in \C[X]$ of degree $n\ge 1$, the moduli of all its roots do not exceed
$$
2 \max \left\{ \left|\frac{a_1}{a_0}\right|, \left|\frac{a_2}{a_0}\right|^{1/2}, \ldots, \left|\frac{a_{n-1}}{a_0}\right|^{1/(n-1)}, \left|\frac{a_n}{2a_0}\right|^{1/n} \right\}.
$$
\end{lemma}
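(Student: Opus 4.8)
The plan is to prove the equivalent statement that $f$ has no root outside the closed disc of radius
$$
R:=2\max\Bigl\{\Bigl|\tfrac{a_1}{a_0}\Bigr|,\ \Bigl|\tfrac{a_2}{a_0}\Bigr|^{1/2},\ \dots,\ \Bigl|\tfrac{a_{n-1}}{a_0}\Bigr|^{1/(n-1)},\ \Bigl|\tfrac{a_n}{2a_0}\Bigr|^{1/n}\Bigr\};
$$
that is, I would fix an arbitrary $z\in\C$ with $|z|>R$ and show directly that $f(z)\neq 0$. Since dividing $f$ through by $a_0$ alters neither its set of roots nor the value of $R$, I would first reduce to the monic case $f(X)=X^n+a_1X^{n-1}+\cdots+a_n$. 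From the definition of $R$ one then reads off the coefficient bounds
$$
|a_k|\le (R/2)^k\quad(1\le k\le n-1)\qquad\text{and}\qquad |a_n|\le 2(R/2)^n,
$$
and this is the only point at which the precise shape of $R$ — in particular the extra factor $2$ sitting inside the last term $|a_n/(2a_0)|^{1/n}$ — enters the argument.

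The core of the proof is then a single application of the triangle inequality. Writing $\rho:=|z|$, one has
$$
|f(z)|\ \ge\ \rho^{n}-\sum_{k=1}^{n}|a_k|\,\rho^{n-k},
$$
and substituting the coefficient bounds, using $\rho>R\ge 0$ (so $\rho>0$ and each estimate below is strict), gives
$$
\sum_{k=1}^{n}|a_k|\,\rho^{n-k}\ <\ \sum_{k=1}^{n-1}\Bigl(\tfrac{\rho}{2}\Bigr)^{k}\rho^{n-k}+2\Bigl(\tfrac{\rho}{2}\Bigr)^{n}=\rho^{n}\Bigl(\textstyle\sum_{k=1}^{n-1}2^{-k}+2^{1-n}\Bigr)=\rho^{n},
$$
where the last equality holds because the geometric sum $\sum_{k=1}^{n-1}2^{-k}=1-2^{1-n}$ exactly absorbs the contribution $2^{1-n}$ coming from the constant term. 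Combining the two displays, $|f(z)|\ge\rho^{n}-\sum_{k=1}^{n}|a_k|\,\rho^{n-k}>\rho^{n}-\rho^{n}=0$, so $z$ is not a root of $f$; hence every root of $f$ satisfies $|z|\le R$, which is the assertion of the lemma.

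I do not expect any genuine obstacle, since the whole argument is elementary. The one thing one must get exactly right is the bookkeeping around the factor $2$: using the bound $|a_n|\le 2(R/2)^n$ (rather than $(R/2)^n$) is precisely what makes the geometric series telescope to $\rho^{n}$ instead of leaving a positive deficit, which is why Fujiwara's bound carries a $2$ inside the $n$-th term but not the others. A secondary, purely cosmetic point is keeping the final inequality strict so that $|z|>R$ genuinely forces $f(z)\neq 0$; this is immediate because $\rho>R\ge 0$ implies $\rho>0$, and the degenerate case $R=0$ (which forces $a_1=\cdots=a_n=0$, so that $f=a_0X^n$ with unique root $0\le R$) is covered as well.
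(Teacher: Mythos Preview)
Your proof is correct and is essentially the standard elementary argument for Fujiwara's bound. Note, however, that the paper does not actually prove this lemma: it is stated as a known result with attribution to Fujiwara \cite{Fujiwara} (and \cite{Batra} is cited for an English-language reference), so there is no proof in the paper to compare against. The self-contained argument you give is the classical one and would be entirely appropriate if one wished to make the paper self-contained at this point.
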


The following result was obtained by Ferguson \cite{Ferguson1997}; see also a previous result of Boyd \cite{Boyd}.

\begin{lemma}
\label{Ferguson}
If $f \in \Z[X]$ is an irreducible polynomial which has exactly $m$ roots on a circle $|z|=c>0$, at least one of which is real, then one has $f(X)=g(X^m)$, where the polynomial $g \in \Z[X]$ has at most one real root on any circle in the plane with center at the origin.
\end{lemma}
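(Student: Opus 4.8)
The plan is to reduce the statement to a sharp claim about polynomials that cannot themselves be written in the form $h(X^e)$, and then to isolate a single hard case. Call $p\in\Z[X]$ \emph{reduced} if it is \emph{not} of the shape $h(X^e)$ for any $e\ge 2$. First I would write $f(X)=g(X^d)$, where $d\ge 1$ is the largest integer for which such a $g\in\Z[X]$ exists; then $g$ is irreducible (a factorisation of $g$ would factorise $f$) and reduced (by maximality of $d$), and the roots of $f$ are exactly the $d$-th roots of the roots of $g$. In particular the real root $\alpha$ of $f$ on $|z|=c$ satisfies $\alpha=\pm c$, so $\alpha^d$ is a \emph{real} root of $g$ lying on the circle $|z|=c^d$. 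The whole lemma now follows from the claim $(\star)$: if $g\in\Z[X]$ is reduced and irreducible and has a real root $\delta$ with $|\delta|=\rho$, then $\delta$ is the only root of $g$ on the circle $|z|=\rho$. Indeed, granting $(\star)$, any root $r$ of $f$ with $|r|=c$ has $r^d$ a root of $g$ on $|z|=c^d$, hence $r^d=\alpha^d$, hence $r\in\{\zeta\alpha:\zeta^d=1\}$; conversely each $\zeta\alpha$ with $\zeta^d=1$ is a root of $f$. So $f$ has exactly $d$ roots on $|z|=c$, whence $m=d$ and $f(X)=g(X^m)$; and by $(\star)$, on any circle centred at the origin that carries a real root of $g$ that root is the unique root of $g$ there, so $g$ has at most one real root on any circle centred at the origin.

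To prove $(\star)$ I may, replacing $X$ by $-X$ if needed (which preserves irreducibility and reducedness), assume $\delta=\rho>0$, and I suppose for contradiction that $g$ has a second root on $|z|=\rho$. First the real–real case: such a second real root can only be $-\delta$, so $g(X)$ and $g(-X)$ are irreducible polynomials with the common root $\delta$, hence scalar multiples of each other, and comparing leading coefficients gives $g(-X)=(-1)^{\deg g}g(X)$. Since $\deg g\ge 2$ here, the odd case forces $g(0)=0$, impossible for an irreducible polynomial of degree $\ge 2$; so $g(-X)=g(X)$, i.e.\ $g(X)=h(X^2)$, contradicting reducedness. This disposes of the real–real case.

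The remaining, essential, case is a \emph{non-real} second root $\gamma$ with $|\gamma|=|\delta|$; then $\bar\gamma\ne\gamma$ is also a root, the three roots $\delta,\gamma,\bar\gamma$ are conjugate over $\Q$, and $\deg g\ge 3$. The key relation is $\gamma\bar\gamma=\rho^2=\delta^2$, linking a non-real root to a real one. My plan is: (i) deduce from this relation, via a Kronecker-type argument, that $\omega:=\gamma/\delta$ (which lies on the unit circle) is a root of unity, of some order $k\ge 3$ — the technical heart being to build, out of the roots of $g$ on the circle and out of $\delta$, an algebraic integer all of whose conjugates lie on the unit circle, so that Kronecker's theorem on roots of unity applies; (ii) then run a Galois-theoretic bootstrap, using that $\gamma=\omega\delta$ and $\bar\gamma=\omega^{-1}\delta$ are roots, that complex conjugation fixes $\delta$, and that each element of the Galois group permutes the primitive $k$-th roots of unity, to show successively that $\omega^i\delta$ is a root of $g$ for all $i$, and hence (by transitivity of the Galois action on the roots) that the whole root set of $g$ is stable under multiplication by a primitive $k$-th root of unity; (iii) conclude $g(X)=\widetilde g(X^k)$ with $k\ge 3$, contradicting reducedness.

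I expect step (ii) to be the main obstacle: $g$ may be reducible over $\Q(\zeta_k)$, so the bootstrap must be arranged so as still to reach every power of $\omega$, and it is exactly this book-keeping that Ferguson's and Boyd's arguments address. (That some hypothesis like the presence of a real root is unavoidable is shown by $\Phi_{10}(X)=X^4-X^3+X^2-X+1$, which is reduced and irreducible with all four roots on $|z|=1$ but none of them real.)
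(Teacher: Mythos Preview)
The paper does not prove this lemma; it is quoted from Ferguson's paper, with a pointer to Boyd's earlier work, and no argument is given. So there is no proof in the paper to compare against.

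Turning to your outline on its own merits: the reduction to the claim $(\star)$ about reduced irreducible polynomials is correct and efficient, and the real--real subcase of $(\star)$ is handled cleanly. The substance is entirely in the real--nonreal subcase, and here your proposal has a genuine gap rather than a routine omission. In step~(i) you want $\omega=\gamma/\delta$ to be a root of unity via Kronecker's theorem, and you flag as the ``technical heart'' the construction of an algebraic integer all of whose conjugates lie on the unit circle. But this is exactly the obstruction, not a detail: the Galois conjugates of $\gamma/\delta$ over $\Q$ are ratios $\sigma(\gamma)/\sigma(\delta)$ as $\sigma$ runs over the Galois group of the splitting field of $g$, and since $\sigma(\gamma)$ and $\sigma(\delta)$ are arbitrary roots of $g$---typically \emph{not} on the circle $|z|=\rho$---there is no reason these ratios should have modulus~$1$; nor is $\gamma/\delta$ an algebraic integer in general. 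Nothing in your outline produces a number to which Kronecker actually applies, and you yourself defer the point. Your step~(ii), the Galois bootstrap exploiting $\gamma\bar\gamma=\delta^2$ together with complex conjugation, is much closer in spirit to what Ferguson and Boyd do; but in their arguments steps~(i) and~(ii) are not separated in the way you propose, and the conclusion that the relevant ratios are roots of unity emerges from the Galois analysis rather than preceding it. As written, the plan correctly locates the hard case and sketches its shape, but the decisive idea is still missing.
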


It is well known that the roots of a polynomial depend continuously on its  coefficients. Here, we essentially quote an explicit version about this continuous dependence from \cite[Theorem 1.4]{Marden}.

\begin{lemma}
\label{lem:conroot}
Let $f,g \in \C[X]$ of degree $n\ge 1$ be as follows:
\begin{align*}
& f(X) = a_0X^n + a_1 X^{n-1} + \cdots + a_n = a_0 \prod_{i=1}^{m}(X-\al_i)^{e_i}, \\
& g(X) = (a_0+\eps_0)X^n + (a_1+\eps_1) X^{n-1} + \cdots + (a_n+\eps_n),
\end{align*}
where $\al_1,\ldots, \al_m$ are distinct roots of $f$, and $e_1+\cdots +e_m=n$. Let
\begin{align*}
&0 < \ga < \frac{1}{2}\min_{i\ne j}|\al_i - \al_j|, \quad M=
\max_{1\le j \le m} \left\{ \sum_{i=0}^{n} (\ga + |\al_j |)^i \right\} , \\
&\delta = \min_{1\le j \le m} \left\{ |a_0|\ga^{e_j} \prod_{i=1,i\ne j}^{m} (|\al_i - \al_j| - \ga)^{e_i} \right\}.
\end{align*}
(Note that if $e_1=n$, then $\ga$ can be chosen as an arbitrary
positive real number, and $\delta=|a_0|\ga^{e_1}$.)
Choose a real number
$\eps = \delta /M$, and assume that
 $|\eps_i| < \eps$ for any $0\le i \le n$. Then, $g$ has
precisely $e_j$ zeros in the disk $D_j$ with center at $\al_j$
and radius $\ga$ for each $1\le j \le m$ (counted with multiplicity).
\end{lemma}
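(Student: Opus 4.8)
The plan is to exhibit, for each large $H$, an explicit ``box'' $B=B(H)$ of at least $c\,H^{n+1}$ integer polynomials of degree $n$ and height $\le H$, \emph{all} of which already have the prescribed root configuration, and then to delete from $B$ the $o(H^{n+1})$ members whose Galois group is not $S_n$. Since $S$ is invariant under complex conjugation, the polynomial
$g(X):=\prod_{i=1}^{n}(X-\be_i)=X^n+b_1X^{n-1}+\cdots+b_n$ has real coefficients $b_j$. The members of $B$ will be the integer polynomials obtained by \emph{independently} rounding the coefficients of $a_0\,g(X)$ to nearby integers, over a whole range of leading coefficients $a_0\asymp H$; the point of using $n+1$ independent roundings (rather than only rounding $a_0$) is precisely to reach the exponent $n+1$ instead of $1$.

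First I would fix a radius $\ga$ with $0<\ga<\varepsilon$ and $\ga<\tfrac12\min_{i\ne j}|\be_i-\be_j|$ (for $\varepsilon$ small the discs $|z-\be_i|<\varepsilon$ are already pairwise disjoint), and apply Lemma~\ref{lem:conroot} with \emph{reference} polynomial $a_0g(X)=a_0X^n+a_0b_1X^{n-1}+\cdots+a_0b_n$, whose roots $\be_1,\dots,\be_n$ are simple. In the notation there this gives $m=n$, each $e_i=1$, a quantity $M=\max_{j}\sum_{i=0}^{n}(\ga+|\be_j|)^i$ depending only on $\ga$ and $S$, and a threshold $\delta=a_0\,\delta_0$ with $\delta_0=\ga\min_{j}\prod_{i\ne j}(|\be_i-\be_j|-\ga)>0$ --- the crucial feature being that $\delta$ is \emph{proportional} to $a_0$. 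Hence, putting $\eta:=\delta_0/(2M)$, for any integers $a_0\ge 1$ and $a_1,\dots,a_n$ with $|a_j-a_0b_j|\le\eta a_0$ for $1\le j\le n$, the polynomial $f(X)=a_0X^n+a_1X^{n-1}+\cdots+a_n$ is a perturbation of $a_0g$ with all $|\eps_i|\le\eta a_0<\delta/M$, so Lemma~\ref{lem:conroot} puts exactly one simple root of $f$ in each disc $|z-\be_i|<\ga$; as these $n$ disjoint discs then contain all $n$ roots, $f$ is separable and its unique root in $|z-\be_i|<\varepsilon$ is the one in $|z-\be_i|<\ga$. The real/non-real matching is automatic: if $\be_i\in\R$ the disc $|z-\be_i|<\ga$ is conjugation-symmetric and $f\in\R[X]$, so the conjugate of that root again lies in the disc and hence equals it; if $\be_i\notin\R$ then $\overline{\be_i}\in S\setminus\{\be_i\}$ forces $|\im\be_i|>\ga$, so every point of $|z-\be_i|<\ga$ --- in particular that root --- is non-real.

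Next I would pin down the range of leading coefficients. With $c_2:=(\max_j|b_j|+\eta+1)^{-1}\le 1$ and $c_1:=c_2/2$, every $f$ as above with integer $a_0\in[c_1H,c_2H]$ satisfies $a_0\le H$ and $|a_j|\le a_0(|b_j|+\eta)<H$, so $H(f)\le H$; let $B$ be the set of all such $f$. For $H$ large (in terms of $\varepsilon$ and $S$) each interval $[a_0b_j-\eta a_0,\,a_0b_j+\eta a_0]$ has length $\ge 2\eta c_1H\ge 1$ and hence contains $\gg_{\varepsilon,S} H$ integers, while $[c_1H,c_2H]$ contains $\gg H$ integers; since distinct coefficient tuples give distinct polynomials, $|B|\ge c\,H^{n+1}$ with $c=c(\varepsilon,S)>0$. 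Finally, $B$ is a sub-box of $\{\,|a_i|\le H\,\}$, so by van der Waerden's theorem \cite{Waerden1936} the number of $f\in B$ whose splitting field has Galois group $\ne S_n$ is at most $o(H^{n+1})$; subtracting, at least $c\,H^{n+1}-o(H^{n+1})\ge\tfrac12 c(\varepsilon,S)\,H^{n+1}$ polynomials of $B$ have Galois group $S_n$, and all of them carry the required roots by the previous paragraph. This gives the assertion for all sufficiently large $H$, which is what is intended (cf.\ the $\limsup$ in the definitions above); shrinking $C(\varepsilon,S)$ covers any remaining small $H$ for which the count is positive.

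The step I expect to be the real crux is the one in the second paragraph: realizing that to reach exponent $n+1$ one must perturb all $n+1$ coefficients simultaneously, and checking that the thresholds of Lemma~\ref{lem:conroot} scale \emph{linearly} in the leading coefficient $a_0$, so that a single \emph{relative} tolerance $\eta$ --- independent of $a_0$ --- keeps every root inside its target disc throughout the range $a_0\asymp H$. Once that is in place the volume count is routine, and the passage to Galois group $S_n$ is immediate because $B$ sits inside the full coefficient box; the small lemma identifying the root in a conjugation-symmetric disc as real is the only other point requiring a word.
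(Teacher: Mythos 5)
There is a fundamental mismatch here: the statement you were asked to prove is Lemma~\ref{lem:conroot} itself, the quantitative perturbation lemma asserting that if every coefficient of $g$ differs from the corresponding coefficient of $f$ by less than $\eps=\delta/M$, then $g$ has exactly $e_j$ zeros in each disk $D_j$. Your write-up never proves this; instead it \emph{invokes} Lemma~\ref{lem:conroot} as a black box (``apply Lemma~\ref{lem:conroot} with reference polynomial $a_0g(X)$\dots'') and goes on to prove a different statement, namely the positive-density result of Theorem~\ref{thm:rs11} (equivalently, Lemma~\ref{keylem} plus the Galois-group refinement via van der Waerden). However reasonable that density argument may be, it leaves the assigned lemma entirely unproved, so as a proof of the statement in question it has a genuine and complete gap.

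What the lemma actually requires is a short Rouch\'e-type argument, which is how the paper proves it: set $h(X)=\eps_0X^n+\eps_1X^{n-1}+\cdots+\eps_n$, so that $g=f+h$. For a point $x$ on the boundary circle $|x-\al_j|=\ga$ one has $|x|\le\ga+|\al_j|$, hence
$|h(x)|\le\sum_{i=0}^{n}|\eps_i|\,|x|^{n-i}<\eps\sum_{i=0}^{n}(\ga+|\al_j|)^i\le\eps M=\delta$,
while
$|f(x)|=|a_0|\,\ga^{e_j}\prod_{i\ne j}|x-\al_i|^{e_i}\ge|a_0|\,\ga^{e_j}\prod_{i\ne j}(|\al_i-\al_j|-\ga)^{e_i}\ge\delta$,
using $|x-\al_i|\ge|\al_i-\al_j|-\ga>0$ (this is where the hypothesis $\ga<\tfrac12\min_{i\ne j}|\al_i-\al_j|$ enters, guaranteeing the disks are disjoint and the factors are positive). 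Thus $|h(x)|<|f(x)|$ on $\partial D_j$, and Rouch\'e's theorem gives that $g$ and $f$ have the same number of zeros inside $D_j$, namely $e_j$. If you want your density argument to stand on its own, you must supply this (or an equivalent) argument; the observation you make about $\delta$ scaling linearly in $a_0$ is a \emph{consequence} of the lemma's formula for $\delta$, not a substitute for its proof.
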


\begin{proof}
Let
$$
h(X)= \eps_0X^n + \eps_1 X^{n-1} + \cdots + \eps_n.
$$
Then, $g=f+h$. Fix an integer $j$ with $1\le j \le m$.
Pick an arbitrary point $x$ on the boundary of $D_j$, clearly we have
$|h(x)| < \eps M$. Besides,
$$
|f(x)| \ge |a_0|\ga^{e_j} \prod_{i=1,i\ne j}^{m} (|\al_i - \al_j| - \ga)^{e_i} \ge \delta.
$$
Since $\delta =\eps M$, on the boundary of $D_j$ we always have $|h(x)| < |f(x)|$. By Rouch\'e's theorem, this means that $g$ has the same number of zeros in $D_j$ as does $f$. Note that $f$ has only one root $\al_j$ in $D_j$ with multiplicity $e_j$.
Thus, the polynomial $g$ has precisely $e_j$ zeros in the disk $D_j$.
\end{proof}

Let $\rho_m(n,H)$ be the number of monic polynomials
$$
f(X) = X^n +a_1X^{n-1} + \dots + a_n\in \Z[X], \quad n\ge 2, \quad H(f)\le H,
$$
which are reducible in $\Z[X]$ with an irreducible factor of degree $m$,
$1 \le m \le n/2$.
In \cite{Waerden1936}, van der Waerden  proved the following sharp lower and upper bounds for $\rho_m(n,H)$; see also \cite{Chela1963} for a reference in English.

\begin{lemma}
\label{reducible}
For integers $n\ge 2$ and $m\ge 1$, we have
$$H^{n-m}\ll \rho_m(n,H) \ll H^{n-m} \quad \textrm{if} \quad 1 \le m < n/2,$$
$$H^{n-m}\log H\ll \rho_m(n,H) \ll H^{n-m}\log H \quad \textrm{if} \quad  m=n/2.$$
\end{lemma}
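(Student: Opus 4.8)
The plan is to prove the upper and lower bounds separately, in each case factoring $f=gh$ with $g$ a monic polynomial of degree $m$ and $h=f/g$ monic of degree $n-m$, and then converting the height bound $H(f)\le H$ into a constraint on Mahler measures through the multiplicativity of $M$ and inequality~\eqref{Mahler}. The useful feature is that $M$ is \emph{exactly} multiplicative, $M(gh)=M(g)M(h)$, whereas $H$ is not; since $g,h$ are monic we also have $M(g),M(h)\ge 1$, so the whole constraint becomes the single inequality $M(g)M(h)=M(f)\le\sqrt{n+1}\,H$.

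For the upper bound I would note that if $f$ is counted by $\rho_m(n,H)$ and $g$ is one of its irreducible factors of degree $m$, then $(g,f/g)$ is a pair as above, so $\rho_m(n,H)$ is at most the number of pairs $(g,h)$ of monic integer polynomials of degrees $m$ and $n-m$ with $H(gh)\le H$. I would bound this by splitting according to the dyadic range $2^{k-1}<M(g)\le 2^k$. By~\eqref{Mahler} in the form $H(g)\le 2^m M(g)$, the number of admissible $g$ in such a range is $\ll 2^{km}$; and $M(h)\le\sqrt{n+1}\,H/2^{k-1}$ forces $H(h)\ll H/2^k$, so the number of matching $h$ is $\ll(H/2^k)^{n-m}$ once $2^k\le H$ (for the finitely many ranges with $2^k\gg H$, $M(h)$ is bounded and there are only $\ll 1$ choices of $h$, so those contribute $\ll H^m\ll H^{n-m}$ in total). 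Thus the $k$-th admissible range contributes $\ll 2^{km}(H/2^k)^{n-m}=H^{n-m}2^{-k(n-2m)}$; summing over the $O(\log H)$ values of $k$ gives a convergent geometric series, hence $\ll H^{n-m}$, when $m<n/2$, and $O(\log H)$ terms of equal order, hence $\ll H^{n-m}\log H$, when $m=n/2$.

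For the lower bound, the weaker estimate $\rho_m(n,H)\gg H^{n-m}$, valid for all $m\le n/2$, comes from a single explicit family: fixing one monic irreducible $g_0\in\Z[X]$ of degree $m$, inequality~\eqref{Mahler} together with multiplicativity of $M$ yields a constant $c=c(n,g_0)>0$ for which $H(g_0h)\le H$ whenever $h$ is monic of degree $n-m$ with $H(h)\le cH$; there are $\gg H^{n-m}$ such $h$, all giving distinct $f=g_0h$ which are reducible with the irreducible degree-$m$ factor $g_0$. To obtain the sharper bound with the factor $\log H$ in the case $m=n/2$, I would vary the irreducible factor over all scales: for each integer $j$ with $0\le j\le\log_2(cH)-1$, take $g$ monic irreducible of degree $m$ with $H(g)\in[2^j,2^{j+1})$ and $h$ monic of degree $m$ with $H(h)\le cH/2^{j+1}$, so that $H(gh)\le H$. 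Since the number of monic irreducible integer polynomials of degree $m$ and height at most $t$ is $(1-o(1))(2t+1)^m$ — classical, and in any event a consequence of the upper bound of this lemma in degrees $<n$, since the reducible ones number $\ll t^{m-1}\log t$ — there are $\gg 2^{jm}$ such $g$ and $\gg(H/2^j)^m$ such $h$, hence $\gg H^m$ pairs $(g,h)$ for each of the $\gg\log H$ admissible $j$. As each $f$ arises from at most $2^n$ such pairs, this produces $\gg H^m\log H=H^{n-m}\log H$ distinct polynomials counted by $\rho_m(n,H)$.

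The main obstacle is the dichotomy between $m<n/2$ and $m=n/2$: it only becomes transparent after the problem has been recast as a two-variable count governed by the single multiplicative inequality $M(g)M(h)\le\sqrt{n+1}\,H$, after which the logarithm appears (or does not) according to whether $\sum_k 2^{-k(n-2m)}$ is a genuine geometric series or a sum of $\asymp\log H$ terms of equal size; matching this logarithm from below in the boundary case is what forces the scale-by-scale construction. A secondary, routine point is the lower bound for the count of irreducible monic polynomials of a given degree, which is supplied by the induction just described.
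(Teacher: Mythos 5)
Your argument is correct, but there is nothing in the paper to compare it against line by line: for Lemma~\ref{reducible} the authors give no proof at all, they simply quote van der Waerden \cite{Waerden1936} (and \cite{Chela1963} for an English reference). Your proof is a legitimate self-contained derivation, and in spirit it is the classical one: write $f=gh$ with both factors monic integral (Gauss's lemma), and count pairs by the size of the degree-$m$ factor. Your specific packaging via the exact multiplicativity of the Mahler measure, the two-sided comparison \eqref{Mahler}, and a dyadic decomposition in $M(g)$ is clean and makes the $m<n/2$ versus $m=n/2$ dichotomy transparent (convergent geometric series versus $\asymp\log H$ equal terms); the lower bounds via a fixed irreducible $g_0$ (giving $\gg H^{n-m}$) and, in the boundary case, via irreducible $g$ ranging over all dyadic height scales (giving the extra $\log H$) are also sound, since each product $f=gh$ is obtained from $O(1)$ pairs. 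Two small points you should make explicit in a written-up version: in the scale-by-scale lower bound, restrict to $j\ge j_0$ for a constant $j_0$ so that the ``almost all monic polynomials of degree $m$ are irreducible'' count (which you correctly obtain by induction from the upper bound in lower degrees, or from \cite{Waerden1936}) really yields $\gg 2^{jm}$ irreducible $g$ in the range $H(g)\in[2^j,2^{j+1})$; and in the upper bound note that dyadic ranges with $2^{k-1}>\sqrt{n+1}\,H$ are empty because $M(g)\le M(f)\le\sqrt{n+1}\,H$, so only the $O(1)$ ranges with $2^k\asymp H$ need the separate (trivial) treatment you give them.
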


Similarly, if $\rho_m^{*}(n,H)$ is the number of polynomials
$$
f(X) = a_0X^n +a_1X^{n-1} + \dots + a_n\in \Z[X], \quad a_0\ne 0, n\ge 2,
$$
which are reducible in $\Z[X]$ with an irreducible factor of degree $m$, $1 \le m< n/2$, and satisfy $H(f)\le H$, then, Kuba \cite[Theorems 1 and 4]{Kuba} (see also \cite{DuB}) showed that

\begin{lemma}\label{reducible2}
For integers $n \ge 3$ and $m\ge 1$, we have
$$H^{n-m+1}\ll \rho_m^{*}(n,H) \ll H^{n-m+1} \quad \textrm{if} \quad 1 \le m<n/2.$$ Furthermore, the number of reducible integer polynomials of degree $n \ge 3$ and height at most $H$ is $O(H^{n})$
and of degree $n=2$ and height at most $H$ is $O(H^2 \log H)$.
\end{lemma}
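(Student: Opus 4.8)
The plan is to get the lower bound in the first estimate from an explicit family of reducible polynomials, and every upper bound from the height/Mahler-measure inequality \eqref{Mahler} together with a dyadic counting argument; nothing beyond \eqref{Mahler}, Gauss's lemma and elementary lattice-point counting is needed. In the ``furthermore'' part, ``reducible'' is taken to mean ``reducible over $\Q$'', i.e. $f$ splits into a product of two integer polynomials of positive degree, since otherwise the polynomials that are reducible only on account of a non-unit constant content would already number $\gg H^{n+1}$.

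For the lower bound $\rho_m^*(n,H)\gg H^{n-m+1}$, fix the polynomial $g(X)=X^m-2$, which is irreducible by Eisenstein's criterion at $2$ and is primitive. For every $h\in\Z[X]$ of degree exactly $n-m$ the product $f=gh$ has degree $n$, nonzero leading coefficient, is reducible (as $\deg h=n-m>n/2>0$), has the irreducible factor $g$ of degree $m$, and its coefficients have the form $h_j-2h_{j-m}$, so that $H(f)\le 3H(h)$. Thus every $h$ with $H(h)\le H/3$ yields a polynomial counted by $\rho_m^*(n,H)$, distinct $h$ give distinct products $gh$, and the number of such $h$ is $\gg H^{n-m+1}$ (the leading coefficient gives $\gg H$ choices and each of the other $n-m$ coefficients gives $\gg H$ choices). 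Hence $\rho_m^*(n,H)\gg H^{n-m+1}$.

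For the matching upper bound, let $f$ be counted by $\rho_m^*(n,H)$. By Gauss's lemma we may write $f=gh$ with $g,h\in\Z[X]$, $\deg g=m$ and $\deg h=n-m$. Multiplicativity of the Mahler measure together with \eqref{Mahler} gives
\begin{equation*}
2^{-n}H(g)H(h)\le M(g)M(h)=M(f)\le\sqrt{n+1}\,H(f)\le\sqrt{n+1}\,H,
\end{equation*}
so $H(g)H(h)\le cH$ with $c=2^n\sqrt{n+1}$. Since every $f$ counted by $\rho_m^*(n,H)$ arises as such a product, it suffices to bound the number $N$ of pairs $(g,h)$ with $\deg g=m$, $\deg h=n-m$ and $H(g)H(h)\le cH$. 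Decomposing dyadically according to $H(g)\in[2^k,2^{k+1})$, the number of $g$ of degree $m$ with $H(g)<2^{k+1}$ is $\ll 2^{k(m+1)}$, while for each such $g$ the number of admissible $h$ of degree $n-m$ is $\ll (H/2^k)^{n-m+1}$, and it is $0$ once $2^k>cH$. Therefore
\begin{equation*}
N\ll\sum_{2^k\le cH}2^{k(m+1)}\Bigl(\frac{H}{2^k}\Bigr)^{n-m+1}=H^{n-m+1}\sum_{k\ge 0}2^{k(2m-n)}\ll H^{n-m+1},
\end{equation*}
the series converging because $m<n/2$ forces $2m-n\le-1$. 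This yields $\rho_m^*(n,H)\ll H^{n-m+1}$; it is precisely here that the hypothesis $m<n/2$ is used, since for $m=n/2$ the same sum contributes an extra factor $\log H$.

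For the ``furthermore'' part, every degree-$n$ polynomial reducible over $\Q$ has an irreducible factor of some degree $m$ with $1\le m\le\lfloor n/2\rfloor$. The contribution of the values $1\le m<n/2$ is $\sum_m\rho_m^*(n,H)\ll\sum_{m\ge 1}H^{n-m+1}\ll H^n$, the leading term $H^n$ coming from $m=1$. When $n$ is even, the boundary value $m=n/2$ is treated by the same Mahler-measure argument with both factors of degree $n/2$: arguing exactly as above one gets a bound $\ll H^{n/2+1}\sum_{2^k\le cH}1\ll H^{n/2+1}\log H$, which is $O(H^n)$ for $n\ge 4$ and is exactly $O(H^2\log H)$ in the case $n=2$ (the only way a quadratic polynomial can be reducible over $\Q$). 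Adding the cases gives $O(H^n)$ for $n\ge 3$ and $O(H^2\log H)$ for $n=2$. The only real difficulty is bookkeeping — making the passage ``$\#\{f\}\le N$'' precise through Gauss's lemma and keeping all implied constants independent of $H$ while the dyadic sum is evaluated — and the decisive point throughout is that $m<n/2$ makes the relevant geometric series converge, so that no stray logarithm appears.
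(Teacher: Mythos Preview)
The paper does not give its own proof of this lemma; it is quoted from the literature (Kuba \cite{Kuba}, see also \cite{DuB}) without argument. So there is nothing to compare against, and the relevant question is simply whether your proof stands on its own.

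It does. The lower bound via the explicit family $(X^m-2)h(X)$ is clean and correct, and the upper bound is the standard Mahler-measure argument: Gauss's lemma gives a factorisation $f=gh$ in $\Z[X]$, multiplicativity of $M$ together with \eqref{Mahler} forces $H(g)H(h)\le 2^n\sqrt{n+1}\,H$, and the dyadic sum over $H(g)\in[2^k,2^{k+1})$ collapses to a convergent geometric series exactly because $2m-n\le -1$. Your handling of the boundary case $m=n/2$ is also correct: the same dyadic sum now has ratio $1$ and contributes a factor $\log H$, giving $O(H^{n/2+1}\log H)$, which is $O(H^n)$ for $n\ge4$ and exactly $O(H^2\log H)$ when $n=2$. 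Your explicit remark that ``reducible'' must mean ``reducible over $\Q$'' (equivalently, having two non-constant factors in $\Z[X]$) is appropriate and matches how the lemma is used elsewhere in the paper.
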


\subsection{An estimate of polynomials with roots close to the
roots of a real polynomial}

In the case of general (not monic) polynomials
the next lemma will be used in a number of estimates from below.

\begin{lemma}
\label{keylem}
Given a separable polynomial $$h(X)=b_0X^n+b_1X^{n-1}+\cdots+b_n \in \R[X]$$ of degree
$n \ge 1$
with roots $\be_1,\dots,\be_n$ and a sufficiently
small positive number $\varepsilon$,
there exists a constant $C=C(\varepsilon,h)$ such that the number
of integer polynomials $f$ of degree $n$ and height at most $H$ with
$n$ roots lying $\varepsilon$-neighbourhoods of $\be_1,\dots,\be_n$
respectively (and such that the root of each such $f$  lying in
the open disc $|z-\be_i|<\varepsilon$ is real if and only if
the number $\be_i$ is real) is at least $C H^{n+1}$.
\end{lemma}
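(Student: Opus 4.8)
The plan is to build the required family of integer polynomials explicitly by perturbing a fixed real multiple of $h$ by a ``small'' integer polynomial with many free coefficients, and then to count the perturbations. First I would fix a large constant $\lambda$ and consider the real polynomial $\lambda h(X)$, whose roots are exactly $\be_1,\dots,\be_n$ with the correct reality pattern. By Lemma~\ref{lem:conroot} applied to $f = \lambda h$ (so $a_i = \lambda b_i$, $e_1 = \cdots = e_n = 1$, $m = n$, with $\ga$ a suitable number smaller than $\varepsilon$ and smaller than half the minimal root separation of $h$), there is a threshold $\eps_0 = \eps_0(\varepsilon, h, \lambda) > 0$ so that any polynomial $g(X) = \sum_{i=0}^n (\lambda b_i + \eps_i) X^{n-i}$ with all $|\eps_i| < \eps_0$ has exactly one root in each disc $|z - \be_j| < \ga \le \varepsilon$. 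Crucially, since $\lambda h$ has real coefficients and its non-real roots come in conjugate pairs, if we also require the perturbing polynomial to have real coefficients then $g$ has real coefficients, so $g$ has a real root in the disc around $\be_i$ precisely when $\be_i$ is real (a non-real root would force its conjugate to lie in the same disc, contradicting that the disc contains only one root of $g$); this is where the ``real if and only if'' clause is secured.

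Next I would turn this into integer polynomials. Write $\lambda b_i = c_i + \theta_i$ with $c_i \in \Z$ and $\theta_i \in [0,1)$; actually it is cleaner to choose $\lambda$ itself to be a positive integer, so $\lambda b_i$ need not be an integer only because $b_i$ need not be — instead I clear denominators: since $h \in \R[X]$ its coefficients are arbitrary reals, so I cannot assume rationality. So the correct move is: approximate each $\lambda b_i$ by a nearby integer $c_i$ with $|\lambda b_i - c_i| < \eps_0/2$ (possible once $\lambda$ is fixed, independent of $H$), set $f_0(X) = \sum c_i X^{n-i} \in \Z[X]$, and note $f_0$ already has the desired root configuration by the displayed estimate with room to spare ($\eps_0/2$). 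Now for an integer $H$ with $\lambda H(f_0)$ small compared to $H$ — more precisely, for any integer vector $(d_0,\dots,d_n)$ with $|d_i| \le t$ where $t = \lfloor \eps_0 H /(2M_0)\rfloor$ for an appropriate constant $M_0$ accounting for the scaling — consider $f(X) = \sum_{i=0}^n (c_i + d_i) X^{n-i}$. One checks $H(f) \le H(f_0) + t \le H$ for $H$ large, and $\|f - \lambda h\|_\infty \le \|f_0 - \lambda h\|_\infty + t < \eps_0$, so Lemma~\ref{lem:conroot} gives that every such $f$ has $n$ roots in the prescribed $\varepsilon$-discs with the prescribed reality. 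The number of such integer vectors is $(2t+1)^{n+1} \gg H^{n+1}$.

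The remaining point is the Galois group condition in Theorem~\ref{thm:rs11} (for Lemma~\ref{keylem} itself there is no Galois constraint, so that lemma is done once the count above is in place, with $C$ absorbing all the fixed quantities $\eps_0, \lambda, M_0, n$). For the Galois-group refinement, the key step will be to subtract off the exceptional polynomials: a polynomial of degree $n$ and height at most $H$ fails to have Galois group $S_n$ only if it is reducible over $\Q$ or has discriminant-related defect, and by Lemma~\ref{reducible2} the number of reducible integer polynomials of degree $n$ and height at most $H$ is $O(H^{n})$, while van der Waerden's theorem says the total number with Galois group $\ne S_n$ is $O(H^{n+1/2})$ or at least $o(H^{n+1})$. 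Hence the $\gg H^{n+1}$ polynomials produced above, minus the $o(H^{n+1})$ bad ones, still number $\gg H^{n+1}$, and all surviving ones have Galois group $S_n$ and the desired root placement.

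The main obstacle I anticipate is purely bookkeeping: tracking how the threshold $\eps_0$ from Lemma~\ref{lem:conroot} depends on the scaling factor $\lambda$ and the radius $\ga$, and making sure the comparison $t \asymp H$ is legitimate, i.e. that $\eps_0$ can be taken uniform as we only vary the $d_i$ while $c_i, \lambda$ stay fixed — which it can, since $\eps_0$ depends only on $\lambda h$ and $\ga$, both fixed. A secondary subtlety is choosing $\ga < \varepsilon$ small enough that the ``one root per disc'' conclusion genuinely pins down the reality pattern; taking $\ga$ less than half of $\min_{i}|\be_i - \overline{\be_i}|$ over non-real $\be_i$ handles it, and this is harmless as it only shrinks $\eps_0$ by a fixed factor.
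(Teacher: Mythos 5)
Your overall strategy (apply Lemma~\ref{lem:conroot} around a real polynomial with the prescribed roots, use conjugation to pin down the reality pattern, then count integer coefficient vectors in a box) is the same as the paper's, and your treatment of the ``real if and only if $\be_i$ is real'' clause is correct. But there is a genuine quantitative gap at the counting step. You fix $\lambda$ (hence $f_0$ and the threshold $\eps_0$ from Lemma~\ref{lem:conroot}) \emph{independently of $H$}, and then allow perturbations $d_i$ with $|d_i|\le t$ where $t\asymp \eps_0 H/(2M_0)$. The asserted inequality $\|f-\lambda h\|_\infty \le \|f_0-\lambda h\|_\infty + t < \eps_0$ is false as soon as $t\ge \eps_0/2$, i.e.\ for all large $H$: with a fixed center $\lambda h$, the Rouch\'e-type control of Lemma~\ref{lem:conroot} only tolerates perturbations of size $<\eps_0$ in each coefficient, which yields $O(1)$ admissible integer polynomials, not $\gg H^{n+1}$. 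Indeed, adding $d_0\asymp H$ to the (bounded) leading coefficient of $f_0$ while the other coefficients stay bounded moves all roots far from $\be_1,\dots,\be_n$, so the polynomials you count do not in general have the required root locations. Your closing remark that ``$\eps_0$ can be taken uniform'' is precisely where the conflation happens: uniformity of $\eps_0$ is not the same as $\eps_0$ growing linearly with $H$.

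The missing idea is the scaling (homogeneity) device: the roots of $f$ coincide with those of $f/H$, and the threshold in Lemma~\ref{lem:conroot} scales linearly with the polynomial (since $\delta$ is linear in $|a_0|$ while $M$ depends only on the roots and $\ga$). So one should center the coefficient box at $Hb_i$ rather than at $\lambda b_i$ with fixed $\lambda$: take integer coefficients $a_i$ in the interval $\bigl(H(b_i-\eps),\,H(b_i+\eps)\bigr)$, where $\eps$ is the threshold furnished by Lemma~\ref{lem:conroot} for $h$ itself; then $f/H$ is an $\eps$-perturbation of $h$, so $f$ has the prescribed root configuration, there are at least $\lfloor 2\eps H\rfloor^{\,n+1}\gg H^{n+1}$ such $f$, and $H(f)\ll H$, which after adjusting constants gives the bound $CH^{n+1}$. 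This is exactly how the paper argues. (Your final paragraph on the Galois-group refinement is not part of Lemma~\ref{keylem}; it matches the paper's deduction of Theorem~\ref{thm:rs11} via Lemma~\ref{En}, so no issue there.)
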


\begin{proof}

Suppose that the polynomial $h(X)$ has exactly $r$ real roots $\be_1,\dots,\be_r$ and $2s$ non-real roots  $\be_{r+1},\dots,\be_n$. These occur as complex conjugate pairs $\be_{r+2i}=\overline{\be_{r+2i-1}}$ for $i=1,\dots,(n-r)/2$.
By Lemma~\ref{lem:conroot}, there exist two positive real numbers  $\ga, \eps$ depending only on $h$ such that any polynomial
\begin{equation}
\label{eq:type g1}
g(X)= a_0X^n+a_1X^{n-1}+\cdots + a_n \in \R[X]
\end{equation}
with $|a_i-b_i | < \eps $ for any $0\le i \le n$ has exactly one root in the disk $D_j$ with center at $\be_j$ and radius $\ga$ for each $1\le j \le n$.

For any $r+1\le j \le n$, since there are no real numbers in $D_j$, the root of $g$ located in $D_j$ must be non-real. Fix $j$ in the range $1\le j \le r$. If the root of $g$ located in $D_j$ is non-real, then its complex conjugate is also located in $D_j$, so $g$ has at least two distinct roots in $D_j$. This leads to a contradiction. Thus, $g$ has exactly $r$ real roots and $2s$ non-real roots.

For any $0\le i \le n$, denote by $I_i$ the open interval $(H(b_i-\eps),H(b_i+\eps))$. Consider a polynomial
\begin{equation}
\label{eq:type f1}
f(X)= a_0X^n+a_1X^{n-1}+\cdots + a_n \in \Z[X]
\end{equation}
with $a_i\in I_i$ for any $0\le i \le n$. Then $f(X)/H$ is the polynomial of type \eqref{eq:type g1}, so $f$ has exactly $r$ real roots and $2s$ non-real roots in the discs $|z-\be_j|<\gamma$, $j=1,\dots,n$. Note that there are at least $\lfloor 2\eps H \rfloor^{n+1}  \gg H^{n+1}$ of such integer polynomials $f$ which have the form \eqref{eq:type f1}. Their heights satisfy $H(f)\ll H$. For sufficiently
large $H$, since
all the implied constants depend on $h$, $\gamma$ and $\eps$
but are independent of $H$,  we deduce the lower bound
$CH^{n+1}$,
as claimed.
\end{proof}

\subsection{An estimate of polynomials with a multiplicative relation among its roots}
For further applications, we need to count a special kind of integer polynomials
whose roots satisfy some multiplicative relation.

\begin{lemma}
\label{lem:pair}
For integers $n\ge 4$ and $H\ge 1$, we have
\begin{align*}
|\{f\in \Z[X]: & \deg f= n, H(f)\le H,
\textrm{ $f$ is monic and has four roots } \\
& \textrm{ $\al_1,\al_2,\al_3,\al_4$ such that $\al_1\al_2=\al_3\al_4$} \}| \ll H^{n-1},
\end{align*}
and
\begin{align*}
|\{f\in \Z[X]: & \deg f= n, H(f)\le H,
\textrm{ $f$ has four roots $\al_1,\al_2, \al_3,\al_4$ } \\
& \textrm{such that $\al_1\al_2=\al_3\al_4$} \}| \ll H^n.
\end{align*}
\end{lemma}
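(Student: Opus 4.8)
The plan is to parametrize the relevant polynomials by the elementary symmetric functions of the four special roots together with the remaining coefficients, and to count via a fibering argument. Consider first the monic case. Write $f(X) = (X-\al_1)(X-\al_2)(X-\al_3)(X-\al_4)\, g(X)$ where $g \in \C[X]$ is monic of degree $n-4$ (this factorization need not be over $\Z$; it is just a bookkeeping device). Set $p = \al_1\al_2 = \al_3\al_4$, $s_1 = \al_1 + \al_2$, $s_2 = \al_3 + \al_4$. Then $(X-\al_1)(X-\al_2) = X^2 - s_1 X + p$ and $(X-\al_3)(X-\al_4) = X^2 - s_2 X + p$, so $f(X) = (X^2 - s_1 X + p)(X^2 - s_2 X + p)\, g(X)$. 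The key point is that the four-root relation $\al_1\al_2 = \al_3\al_4$ forces the quadratic factors to share their constant term $p$, so the quartic block $(X^2 - s_1 X + p)(X^2 - s_2 X + p)$ depends on only three parameters $s_1, s_2, p$ rather than four.

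Next I would bound, for each fixed choice of the quartic block, the number of possible tails $g$. Since $H(f) \le H$, Lemma~\ref{Fujiwara} gives that all roots of $f$ have modulus $\ll H$; hence $|s_1|, |s_2| \ll H$ and $|p| \ll H^2$, and the coefficients of $g$ (being bounded symmetric functions of the remaining $n-4$ roots, each of modulus $\ll H$) satisfy $H(g) \ll H^{n-4}$, so there are $\ll H^{n-4}$ monic integer-coefficient candidates for the tail. The subtle point is that $s_1, s_2, p$ themselves need not be integers — only the full product $f$ must lie in $\Z[X]$. To handle this, I would argue that once $g$ is fixed (among $O(H^{n-4})$ choices) and once the pair $\{s_1+s_2, s_1 s_2 + p\}$ — the coefficients of $X^3$ and $X^2$ in the quartic block, call them $c_1, c_2$ — are fixed, the remaining coefficients of $f$ determine $p$ (e.g.\ the coefficient of $X$ in the quartic block is $-p(s_1+s_2) = -p c_1$, which together with knowledge that $p$ appears as a constant term pins $p$ down from the relevant coefficient of $f$, and then $s_1, s_2$ are the two roots of $t^2 - c_1 t + (c_2 - p) = 0$). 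So effectively the quartic block is determined by the integer pair $(c_1, c_2)$ with $|c_1| \ll H$ and $|c_2| \ll H^2$: that is $\ll H^3$ choices. Multiplying, $\ll H^{n-4} \cdot H^3 = H^{n-1}$, which is the claimed bound.

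The non-monic case is the same argument with one extra parameter. Write $f(X) = a_0 (X^2 - s_1 X + p)(X^2 - s_2 X + p)\, g(X)$ with $g$ monic of degree $n-4$; now $|a_0| \le H$ contributes an extra factor of $H$, giving $\ll H \cdot H^{n-1} = H^n$. (Alternatively, reduce to the monic case by the standard device of recording $a_0$ separately and noting that the leading block still only has three continuous parameters.)

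The main obstacle I anticipate is the non-integrality of $s_1, s_2, p$: the naive count would put $s_1$ in an interval of length $\ll H$, $s_2$ likewise, and $p$ in an interval of length $\ll H^2$, giving $\ll H^4 \cdot H^{n-4} = H^n$ in the monic case — one power of $H$ too many. The savings of one power must come from the observation that the quartic block is an honest \emph{integer} polynomial contribution only in the sense that $f$ is; the cleanest route is the one above, namely to show that after fixing $g$ and the two genuinely-integer coefficients $(c_1,c_2)$ of the cubic and quadratic terms of the quartic block (which is $O(H^3)$ data), the third parameter $p$ is already forced by integrality of a lower coefficient of $f$. I would spend the most care making that forcing step rigorous — in particular handling the possibility $c_1 = 0$ (where $p$ must instead be extracted from the constant or linear coefficient) as a separate, easily-bounded sub-case. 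Everything else (the root-size bounds, counting tails, the non-monic bookkeeping) is routine given Lemma~\ref{Fujiwara}.
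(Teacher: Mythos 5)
There is a genuine gap, and it is the very point your last paragraph flags as the ``main obstacle'': the factorization $f=a_0(X^2-s_1X+p)(X^2-s_2X+p)g(X)$ lives over $\C$ (or $\R$), not over $\Z$, so none of the objects you fiber over ranges over a discrete set. The tail $g$ need not have integer (or even rational) coefficients, so ``$\ll H^{n-4}$ monic integer-coefficient candidates for the tail'' is not a meaningful count --- and even if $g$ were integral, a monic polynomial of degree $n-4$ whose roots have modulus $\ll H$ has $k$-th coefficient of size $\ll H^k$, giving on the order of $H^{(n-4)(n-3)/2}$ candidates, not $H^{n-4}$. The same problem recurs with $c_1,c_2$: these are coefficients of the quartic block, not of $f$, so they are arbitrary real numbers and ``$\ll H^3$ choices'' again counts a continuum. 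Finally, the ``forcing'' step does not pin $p$ to finitely many values: integrality of a lower coefficient of $f$ only restricts $p$ to a countable family (one value per admissible integer value of that coefficient), so to make it finite you must also enumerate that integer coefficient, at which point the whole fibering collapses back to ``choose $n$ of the integer coefficients of $f$; the last one is constrained'' --- which is precisely the statement to be proved and needs an explicit algebraic relation among the coefficients, not just a parameter count. (A small arithmetic slip besides: the $X^2$-coefficient of the block is $s_1s_2+2p$, not $s_1s_2+p$.)

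Your underlying intuition --- the shared constant term $p$ removes one degree of freedom, so the coefficient vectors of such $f$ lie on a hypersurface in $\Z^{n+1}$ --- is exactly right, but it must be made algebraic to yield a lattice-point count. The paper does this by forming $\prod_{1\le i<j\le n}(X-\alpha_i\alpha_j)$, whose coefficients are symmetric in the roots and hence polynomials in $a_1/a_0,\dots,a_n/a_0$; after clearing denominators by a power of $a_0$ one gets a polynomial in $\Z[X]$ whose discriminant $\Delta$ is an integer polynomial in $a_0,\dots,a_n$. The relation $\alpha_1\alpha_2=\alpha_3\alpha_4$ makes two roots of that product coincide, so $\Delta(a_0,\dots,a_n)=0$; a nontrivial polynomial equation among $n+1$ (resp.\ $n$, in the monic case $a_0=1$) integer coefficients of absolute value at most $H$ then allows at most $O(1)$ values of one coefficient for each choice of the others, giving $O(H^n)$ resp.\ $O(H^{n-1})$. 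To repair your write-up you would need to replace the fibering by such an explicit integral relation (e.g.\ a resultant/discriminant eliminant expressing that two of the pairwise root products agree).
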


\begin{proof}
The proofs of these two upper bounds are the same. Here, for brevity we only prove the second upper bound.
Write
$$
f(X)=  a_0X^n  + a_1X^{n-1} +\cdots + a_n = a_0\prod_{i=1}^{n}(X-\al_i) .
$$
Here, although $f$ may have multiple roots, we still view that all $\al_1,\ldots,\al_n$ are different (at least from the viewpoint of symbols). For integer $k$ in the range $1\le k \le n$, the $k$th elementary symmetric polynomial with respect to $\al_1,\ldots, \al_n$ is
$$
s_k(\al_1,\ldots,\al_n) = \sum_{1\le j_1 < j_2 < \cdots < j_k \le n} \al_{j_1}\al_{j_2}\cdots \al_{j_k}= (-1)^{k} \frac{a_k}{a_0}.
$$

Consider the polynomial
$$
 \prod_{1 \leq i < j \leq n} (X-\al_i \al_j).
$$
Its coefficients are symmetric polynomials with respect to $\al_1,\ldots, \al_n$ over $\Z$, so they are integer polynomials in the variables $a_1/a_0, \ldots, a_n/a_0$ (in fact, the integer coefficients depend only on $n$).
So, multiplied by an appropriate power of $a_0$ (depending only on $n$), this product is an integer polynomial, say $g(X)$,
whose coefficients are polynomials with integer coefficients in $a_0,a_1,\ldots ,a_n$. Let $\Delta$ be the discriminant of the polynomial $g$. Then, $\Delta$ is a polynomial in
$a_0,a_1, \ldots, a_n$ with integer coefficients (depending only on $n$).
The property $\al_1\al_2=\al_3\al_4$ implies that $\Delta=0$. So one of the coefficients $a_0,a_1,\ldots,a_n$,
say $a_0$, is predetermined in $m=\deg_{a_0} \Delta$ ways by other $n$ coefficients $a_1,\dots,a_n$ (note that
the degree of the polynomial $\Delta$, which is not less than $m$, depends  only on $n$). Consequently, the number
of such polynomials $f$ is $O(H^n)$.
\end{proof}

\subsection{Probabilistic Galois Theory}

In Section \ref{sec:modulus}, we will present an alternative proof of Theorem \ref{vienas}, by using Probabilistic Galois Theory. Here are some basic facts concerning it.

Let $S_n$ be the full symmetric group on a finite set of $n$ symbols.
Given a polynomial $f \in \Z[X]$, denote by $G_f$ the Galois group of the splitting field of $f$ over $\Q$.
For any integers $n\ge 1$ and $H\ge 1$, we define the sets
\begin{align*}
\E_n(H) =  \{f(X)= X^n + a_1X^{n-1} & +\cdots + a_n \in \Z[X]: \\
& G_f \ne S_n, |a_i| \le H, i=1,2,\cdots, n \},
\end{align*}
and
\begin{align*}
\E_n^*(H) =  \{f(X)=  a_0X^n & + a_1X^{n-1} +\cdots + a_n \in \Z[X]: \\
& G_f \ne S_n, a_0\ne 0, |a_i| \le H, i=0,1,\cdots, n \}.
\end{align*}
Then, put $E_n(H)=| \E_n(H) |$ and $E_n^*(H)=| \E_n^*(H) |$.

The study of bounding $E_n(H)$ and $E_n^*(H)$ was initiated by van der Waerden \cite{Waerden1936} who showed that
\begin{lemma}
\label{En}
For integers $n \ge 3$ and $H \ge 2$, we have
$$
E_n(H)\ll H^{n-1/(6(n-2)\log\log H)}
$$
and
$$
E_n^*(H) \ll H^{n+1-1/(6(n-2)\log\log H)},
$$
where the implied constants depend only on $n$.
\end{lemma}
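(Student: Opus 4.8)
The plan is to reproduce van der Waerden's classical argument, which runs entirely through reduction modulo primes. For the monic case, recall that if $f\in\Z[X]$ is monic and $p$ is a prime not dividing $\mathrm{disc}(f)$, then $f\bmod p$ is separable over $\F_p$ and its factorization type (the multiset of degrees of irreducible factors over $\F_p$) coincides with the cycle type of the Frobenius at $p$ acting as an element of $G_f\subseteq S_n$. Consequently, if for two distinct primes $p,q$ the polynomial $f\bmod p$ is irreducible of degree $n$ and $f\bmod q$ factors as (linear)$\times$(irreducible of degree $n-1$), then $G_f$ contains an $n$-cycle and an $(n-1)$-cycle fixing a point, hence $G_f$ acts transitively and contains a transposition-free... more carefully, one combines three well-chosen factorization types (an $n$-cycle, an $(n-1)$-cycle, and a product of a transposition with odd cycles) to force $G_f=S_n$; these are the standard generators. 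So the first step is: fix three suitable congruence patterns $T_1,T_2,T_3$ for $f$ modulo three primes $p_1<p_2<p_3$ guaranteeing $G_f=S_n$, and bound $E_n(H)$ by the number of $f\in\rP_n(H)$ avoiding \emph{all} such prescriptions.

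The second step is the counting heart. For a fixed prime $p$ and a fixed factorization type $T$ over $\F_p$, the proportion of residues $\bar a=(\bar a_1,\dots,\bar a_n)\in\F_p^n$ for which $X^n+\bar a_1X^{n-1}+\cdots+\bar a_n$ has type $T$ equals (by the Frobenius/Chebotarev count applied to the "generic" polynomial, or just by directly counting monic polynomials over $\F_p$ of each factorization type) a positive rational $c_T>0$ independent of $p$, up to an $O(1/p)$ error. Thus the density of coefficient vectors mod $p$ that \emph{fail} to have the needed type is at most $1-c_T+O(1/p)\le 1-c$ for some fixed $c=c(n)>0$ once $p$ is large. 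Taking $N$ primes $p_1,\dots,p_N$ in a dyadic range around some parameter $y$ and invoking the Chinese Remainder Theorem, the number of $\bar a\bmod p_1\cdots p_N$ that fail every prescribed type at every $p_i$ is at most $\prod_i(1-c)(p_i)^n\cdot(1+o(1))$, i.e.\ a factor $(1-c)^N$ smaller than all of $(\Z/\!\prod p_i)^n$. Lifting back: the number of $f\in\rP_n(H)$ is $\asymp H^n$ by equidistribution in residue classes modulo $P:=p_1\cdots p_N$, provided $P=o(H)$, with a secondary contribution bounded by $(1-c)^N (2H/P+1)^n P^n \ll (1-c)^N H^n$ plus a boundary term $O(H^{n-1}P)$. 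Choosing $N$ as large as the constraint $P\le H^{1/2}$ allows — by the prime number theorem one may take $p_i\asymp y$ with $Ny\log y\asymp\log H$, so $N\asymp\log H/\log\log H$ — yields $(1-c)^N=\exp(-\Theta(\log H/\log\log H))=H^{-\Theta(1/\log\log H)}$, which after tracking the constant through the three-prescription argument gives exactly the stated exponent $n-1/(6(n-2)\log\log H)$.

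The third step handles $E_n^*(H)$: write $f=a_0\tilde f$ where we cannot assume monicity, but the Galois group of $f$ equals that of $a_0^{n-1}f(X/a_0)$, a monic polynomial with coefficients $a_1,a_0a_2,\dots,a_0^{n-1}a_n$; alternatively one works directly with the reduction $f\bmod p$ for $p\nmid a_0\,\mathrm{disc}(f)$, where the same Frobenius-cycle-type dictionary applies. Summing the monic-type bound over the $\ll H$ choices of $a_0$ (each contributing $\ll H^{n-1/(6(n-2)\log\log H)}$) produces the extra factor $H$, giving $E_n^*(H)\ll H^{n+1-1/(6(n-2)\log\log H)}$. I expect the main obstacle to be the bookkeeping in step two: one must verify that the $O(1/p)$ error terms in the Frobenius density, once multiplied across $N\asymp\log H/\log\log H$ primes and combined with the CRT lifting error, do not overwhelm the gain $(1-c)^N$ — this forces the precise balance $P\le H^{1/2}$ (or any fixed power $<1$) and is the reason the saving is only $1/\log\log H$ rather than a fixed power of $H$. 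Everything else is the standard van der Waerden machinery, and since the statement is quoted verbatim from \cite{Waerden1936} (see also \cite{Chela1963}), the cleanest exposition simply cites those references after sketching the mechanism above.
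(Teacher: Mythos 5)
The paper does not actually prove Lemma~\ref{En}: it is quoted as a known result of van der Waerden \cite{Waerden1936} (with \cite{Chela1963} as an English reference), so there is no internal proof to compare against, and closing with a citation --- as you do --- is precisely the paper's treatment. Your sketch of the mechanism is the standard van der Waerden sieve and has the right shape: Dedekind's dictionary between factorization types modulo unramified primes and cycle types of Frobenius in $G_f$, three prescribed types (an $n$-cycle, an $(n-1)$-cycle, and a transposition times odd cycles, the last raised to an odd power to extract a transposition) forcing $G_f=S_n$, densities of each type modulo $p$ bounded below uniformly in $p$ up to $O(1/p)$, and a Chinese Remainder sieve over $N$ primes of size about $\log H$ with product at most a fixed power of $H$, giving a saving $(1-c)^N=H^{-\Theta(1/\log\log H)}$. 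Two small blemishes: the relation should be $N\log y\asymp\log H$, not $Ny\log y\asymp\log H$ (as written it would force $N\asymp 1/\log\log H$), and the specific constant $1/(6(n-2))$ is asserted rather than derived --- acceptable only because the lemma is ultimately cited.

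The one step that would fail as you primarily state it is the deduction of the bound for $E_n^*(H)$ by ``summing the monic-type bound over the $\ll H$ choices of $a_0$.'' For fixed $a_0$ the monicized polynomial $a_0^{n-1}f(X/a_0)$ has height of order $H^{n}$, so applying the monic bound to it gives roughly $H^{n(n-\epsilon)}$, not the needed $H^{n-\epsilon}$ per value of $a_0$; and the monic bound does not apply verbatim to the fixed-leading-coefficient family. To argue per fixed $a_0$ one must rerun the sieve excluding the primes dividing $a_0$, and this needs care because a primorial-like $a_0\le H$ can be divisible by essentially all primes up to $\asymp\log H$, i.e.\ by all your sieving primes, which also perturbs the exponent constant. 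The clean route --- the one your ``alternatively'' clause gestures at, and the one that matches the cited source --- is to sieve jointly on all $n+1$ coefficients modulo each prime, declaring the event ``leading coefficient $\equiv 0 \pmod p$'' a failed prescription (an $O(1/p)$ loss in density); this yields $E_n^*(H)\ll H^{\,n+1-1/(6(n-2)\log\log H)}$ directly, with the extra factor $H$ coming from the extra coordinate rather than from a sum over $a_0$.
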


Later, the bound on $E_n(H)$ has been improved in several ways; see \cite{Cohen,Dietmann2012,Dietmann2013,Gallagher}. For instance, Gallagher \cite{Gallagher} proved that $E_n(H) \ll H^{n-1/2}\log H$, and recently Zywina improved this bound to $E_n(H) \ll H^{n-1/2}$ in \cite[Proposition 1.5]{Zywina}.

\section{Counting integer polynomials via moduli of their roots}
\label{sec:modulus}

In \cite[Theorem 1.1]{DS2014b}, it has been shown that almost all monic integer polynomials are dominant (that is, have exactly one root with maximal modulus). Here, our considerations include both monic and general integer polynomials.

For positive integers $n,k$ with $k\le n$, we define the sets
\begin{align*}
\rA_n(k,H) =  \{&f(X)=  X^n  + a_1X^{n-1} +\cdots + a_n \in \Z[X]: H(f)\le H, \\
& \textrm{ $f$ has exactly $k$ roots with maximal modulus} \},
\end{align*}
and
\begin{align*}
\rA_n^*(k,H) =  \{&f(X)=  a_0X^n  + a_1X^{n-1} +\cdots + a_n \in \Z[X]: H(f)\le H, \\
& \textrm{$a_0\ne 0$, $f$ has exactly $k$ roots with maximal modulus} \}.
\end{align*}
Then, denote $A_n(k,H)= |\rA_n(k,H)|$ and $A_n^*(k,H)=
|\rA_n^*(k,H)|$.

In this section, we will estimate $A_n(k,H)$ and $A_n^*(k,H)$.

\subsection{The monic case}

It has been proved in \cite[Theorem 1.1]{DS2014b} that
$$
\lim_{H\to \infty} \frac{A_n(1,H)}{(2H+1)^n}=1.
$$
Now, we estimate the error term, that is, the difference between the quantities $A_n(1,H)$ and $(2H+1)^n$.

\begin{theorem}
\label{thm:moduli2+}
For integers $n\ge 2$ and $H\ge 1$, we have
$$
H^{n-1/2} \ll (2H+1)^n-A_n(1,H)= \sum_{k=2}^nA_n(k,H)\ll H^{n-1/2}.
$$
\end{theorem}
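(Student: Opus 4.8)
The plan is to establish the two bounds in $H^{n-1/2}\ll(2H+1)^n - A_n(1,H) = \sum_{k=2}^n A_n(k,H)\ll H^{n-1/2}$ separately. The identity $(2H+1)^n - A_n(1,H)=\sum_{k=2}^n A_n(k,H)$ is immediate since $\rP_n(H)$ is the disjoint union of the sets $\rA_n(k,H)$ over $1\le k\le n$ (every monic polynomial of degree $n$ has a well-defined number $k$ of roots of maximal modulus), and $|\rP_n(H)|=(2H+1)^n$. So it remains to bound $\sum_{k=2}^n A_n(k,H)$ from above and below by a constant times $H^{n-1/2}$.

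For the upper bound, I would split the polynomials counted in $\sum_{k\ge 2}A_n(k,H)$ into reducible and irreducible ones. The reducible ones are $O(H^{n-1})$ by Lemma~\ref{reducible} (summing the bounds over $1\le m\le n/2$; the dominant term is $m=1$, giving $H^{n-1}$, with an extra $\log H$ only when $n=2$, $m=1$, but $H^{n-1}\log H = H^{1}\log H \ll H^{3/2}=H^{n-1/2}$), which is $o(H^{n-1/2})$. For an irreducible monic $f\in\Z[X]$ with at least two roots of maximal modulus, all roots of maximal modulus lie on a circle $|z|=c$ with $c=M(f)^{1/n}\le (\sqrt{n+1}H(f))^{1/n}$ by \eqref{Mahler}; here I want to exploit that having $\ge 2$ roots on a common circle forces an algebraic relation. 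The cleanest route: if $f$ is irreducible and has $m\ge 2$ roots on $|z|=c$, then either all its roots lie on that circle, or not. I expect the argument to run through a resultant/discriminant vanishing condition analogous to Lemma~\ref{lem:pair}: two roots $\al_i,\al_j$ with $|\al_i|=|\al_j|$ means $\al_i\bar\al_i=\al_j\bar\al_j$, and since $f$ is real (integer coefficients) the conjugates $\bar\al_i,\bar\al_j$ are also roots, so one gets a multiplicative relation $\al_i\al_{i'}=\al_j\al_{j'}$ among four roots (with $\al_{i'}=\bar\al_i$, etc.). Then Lemma~\ref{lem:pair} applies when $n\ge 4$ and yields $\ll H^{n-1}$; the cases $n=2,3$ I would handle directly (for $n=2$, two roots of equal modulus that are non-real conjugates is the generic case, so one must instead note the number of such monic quadratics with *real* roots of equal modulus, i.e.\ $\al_2=-\al_1$, is $O(H)$, and... actually for $n=2$ the claim $(2H+1)^2 - A_2(1,H)\ll H^{3/2}$ needs the two-conjugate-roots polynomials to be counted in $A_2(1,\cdot)$? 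No — $k=2$ there. Hmm, so for $n=2$ one needs $A_2(2,H)\ll H^{3/2}$, which is false since there are $\gg H^2$ monic quadratics with non-real roots). So the upper bound as stated must *exclude* the dominant contribution differently — I would revisit: the theorem counts $k\ge 2$, and for $n=2$ the pair of complex conjugate roots gives $k=2$ for $\gg H^2$ polynomials, contradicting $\ll H^{3/2}$. Therefore I must be misreading; presumably for $n=2$ the relevant count is still governed by something of size $H^{3/2}$, meaning almost all monic quadratics are counted in $A_2(1,H)$, i.e.\ have a *real* dominant root — but a monic real quadratic $X^2+a_1X+a_2$ has two real roots iff $a_1^2\ge 4a_2$, which holds for a positive-density (not density-one) set. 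I would resolve this discrepancy by carefully re-examining the $n=2$ case in the author's framework before writing the general argument; most likely the intended reading restricts to $n\ge 2$ with the understanding that for $n=2$ one uses the sharper structure (Lemma~\ref{Ferguson}, Boyd–Ferguson) that an irreducible $f$ with two roots on a circle and a real root satisfies $f(X)=g(X^2)$, hence $a_1=0$, giving only $O(H)$ such polynomials, while reducible quadratics are $O(H)$ as well — so the real obstruction is counting monic quadratics with a *non-real* pair, which forces reconsidering whether those belong to $A_2(2,H)$ at all.

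For the lower bound $\sum_{k\ge 2}A_n(k,H)\gg H^{n-1/2}$ in the monic case, I would exhibit an explicit family of monic integer polynomials of height $\le H$ with at least two roots of maximal modulus, of cardinality $\gg H^{n-1/2}$. A natural construction: take $f(X)=(X^2+b)\cdot q(X)$ where $b$ is a positive integer with $b\le H$ chosen so that $\sqrt b$ exceeds the modulus of every root of $q$, and $q$ ranges over monic integer polynomials of degree $n-2$ of suitably bounded height; the two roots $\pm i\sqrt b$ are then the roots of maximal modulus, giving $k=2$. Counting: to force $\sqrt b > $ (max root modulus of $q$), by Lemma~\ref{Fujiwara} it suffices that $\sqrt b > 2\max_j |c_j|^{1/(j)}$ where $c_j$ are the coefficients of $q$; choosing $H(q)\le \lfloor (b/4)^{1/?}\rfloor$... the optimal balance between the size of $b$ (contributing up to $H$ choices) and the size of $H(q)$ (contributing up to $H(q)^{n-2}$ choices) is what should produce the exponent $n-1/2$: taking $b\asymp H$ and $H(q)\asymp \sqrt b \asymp H^{1/2}$ gives $H\cdot (H^{1/2})^{n-2}=H^{(n)/2}$ — not enough. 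Instead take $b\asymp H$ fixed-ish and let $b$ range over an interval of length $\asymp H$ while $H(q)$ ranges up to $\asymp \sqrt H$: count $\asymp H\cdot H^{(n-2)/2}=H^{n/2}$, still short of $H^{n-1/2}$ for $n\ge 2$. The exponent $n-1/2$ suggests instead fixing the "small" part to have $O(H^{1/2})$ freedom and the "large" part $H^{n-1}$ freedom: e.g.\ $f(X)=g(X)(X-a)$ where $g$ is a fixed (or nearly fixed) quadratic-type block contributing the maximal modulus. I would search for the construction matching $n-1/2$ by dimension-counting, likely $f(X)=(X^2 - a^2)h(X)$ or using roots of the form $a\pm\sqrt{\delta}$ with $\delta$ small, where $a$ has $\asymp H$ choices (one free coefficient), one coefficient is constrained to within $O(\sqrt H)$ to keep the double-root-like pair dominant, and the remaining $n-2$ coefficients are free up to $H$: that gives $H^{n-2}\cdot H\cdot H^{1/2} = H^{n-1/2}$. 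The main obstacle throughout is making the "at least two roots of maximal modulus" condition robust under the free variation of the remaining coefficients while keeping the count exactly of order $H^{n-1/2}$; this requires a careful application of Lemma~\ref{Fujiwara} (or Lemma~\ref{lem:conroot}) to certify that the designated pair genuinely dominates, and a matching combinatorial count. I expect the upper-bound reduction to the four-root relation $\al_i\al_{i'}=\al_j\al_{j'}$ and the small-$n$ exceptional cases to be the other delicate point.
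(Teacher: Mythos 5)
Your upper-bound strategy has a genuine gap that cannot be repaired along the lines you sketch. The reduction to a four-root multiplicative relation $\al_i\al_{i'}=\al_j\al_{j'}$ and Lemma~\ref{lem:pair} only works when there are at least \emph{two distinct} conjugate pairs on the maximal circle, i.e.\ when $k\ge 3$; for $k=2$ (one conjugate pair $\al,\overline{\al}$ of maximal modulus) the relation degenerates to the tautology $\al\overline{\al}=\al\overline{\al}$ and gives no constraint. This is precisely the dividing line in the paper: Lemma~\ref{lem:pair} is what proves $\sum_{k\ge 3}A_n(k,H)\ll H^{n-1}$ (Theorem~\ref{thm:moduli3+}), while $A_n(2,H)$ is genuinely of order $H^{n-1/2}$ — so if your route went through it would contradict the very lower bound you are also asked to prove. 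The paper's actual upper-bound mechanism is much more elementary and does not split into reducible/irreducible at all: if a monic $f$ has at least two roots on the circle $|z|=R$, then $M(f)\ge R^2$, so by \eqref{Mahler} $R\le (n+1)^{1/4}H^{1/2}$, hence $|a_1|\le nR\ll \sqrt{H}$; with $a_2,\dots,a_n$ arbitrary this gives $\ll H^{n-1/2}$ at once. Your $n=2$ ``contradiction'' also dissolves under this observation: a monic quadratic with a non-real pair satisfies $a_1^2<4a_2\le 4H$, so there are only $O(H^{3/2})$ such polynomials, not $\gg H^2$ (and, contrary to your remark, almost all monic quadratics in the box have two real roots, since $a_1^2\ge 4a_2$ fails only on a set of size $O(H^{3/2})$).

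The lower bound is also not established in your proposal. Your explicit family $(X^2+b)\,q(X)$ gives, as you compute, only $\asymp H^{n/2}$ polynomials, and the subsequent dimension-counting heuristic ($H^{n-2}\cdot H\cdot H^{1/2}$) is never converted into an actual family together with a verification that the designated pair really attains the maximal modulus. The paper's construction realizes exactly that dimension count but in coefficient space rather than root space: take $-\delta\sqrt{H}\le a_1<0$, $a_2\ge \delta^2H$, $1\le a_i\le H$ for $i\ge 2$, plus a few inequalities between consecutive coefficients; one then shows $f$ has no nonnegative real roots, that all real roots lie in $(-1,0)$, and that $a_n\ge 1$ forces some root of modulus at least $1$, so the largest root is non-real and hence appears with its conjugate, giving $k\ge 2$. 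Counting ($\asymp\sqrt{H}$ choices for $a_1$, $\asymp H$ for each of $a_2,\dots,a_n$ up to constant-factor losses) yields $\gg H^{n-1/2}$. Without some such verified construction, and without the Mahler-measure bound on $|a_1|$ for the upper bound, the proposal does not prove either inequality of the theorem.
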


\begin{proof}
First, given an arbitrary polynomial $f(X)=X^n+a_1X^{n-1}+\cdots+a_n\in \C[X]$, let $R$ be the maximal modulus of the roots of $f$. Suppose that $f$ does not have exactly one root with maximal modulus, that is, it has at least two roots on the circle $|z|=R$. Then, by the definition of the Mahler measure, we have $R^2\le M(f)$. Noticing that $|a_1|\le nR$ and applying \eqref{Mahler}, we obtain
$$
|a_1| \le n(n+1)^{1/4}H(f)^{1/2},
$$
which implies that
$$
\sum_{k=2}^nA_n(k,H) \ll H^{n-1/2},
$$
since $|a_2|,\dots,|a_n| \leq H$.

For a lower bound, let us consider the following polynomials
 $$
 f(X)=X^n+a_1X^{n-1}+a_2X^{n-2}+\cdots+a_n\in \Z[X],
 $$
 where $-\delta \sqrt{H} \leq a_1<0$ for some fixed real number $0<\delta<1$ and $1\le a_i\le H$ for each $2\le i\le n$ and $a_2\ge \delta^2 H$,
and either $ a_2 \ge a_3, a_4\ge a_5, \ldots,  a_{n-2}\ge a_{n-1}$ if $n$ is even, or $ a_2 \ge a_3, a_4\ge a_5, \ldots,  a_{n-1}\ge a_n$ if $n$ is odd. Then, when $H$ is large enough, such polynomials $f$ indeed exist, and we further claim that $f\not\in \rA_n(1,H)$. It is clear that, by counting these polynomials $f$, we would get
$$
H^{n-1/2} \ll \sum_{k=2}^nA_n(k,H),
$$
which completes the proof.

To prove the claim it suffices to show that the largest in modulus root of $f$ is non-real. Assume that $f$ has a real  root $x\ge 0$. Clearly, $x>0$. Since
$a_2,\dots,a_n>0$, we have
$$
f(x) > x^{n-1}(x + a_1 ) \ge 0
$$
if $x \ge |a_1|$, so we must have $0<x < |a_1|$.
Thus, $|a_1x| < |a_1|^2 \leq \delta^2 H$.  Then
$$
f(x)>a_1x^{n-1}+a_2x^{n-2}=x^{n-2}(a_1x+a_2) > 0,
$$
which contradicts $f(x)=0$. So, $f$ does not have non-negative roots.

Set $g(X)=f(-X)$ if $n$ is even, otherwise let $g(X)=-f(-X)$. Then, the negative real roots of $f(X)$ are exactly the positive real roots of $g(X)$.
In case $n$ is even, as
$$
g(X)=X^n-a_1X^{n-1}+a_2X^{n-2}-a_3X^{n-3}+\cdots+a_{n-2}X^2-a_{n-1}X+a_n,
$$
for any real $x\ge 1$ we have
$$
g(x)> x^{n-3}(a_2x-a_3)+\cdots+x(a_{n-2}x-a_{n-1}) \ge 0.
$$
(Evidently, for $n=2$ the polynomial $g(X)$ has no positive roots.)
Similarly, if $n$ is odd, observe that
$$
g(X)=X^n-a_1X^{n-1}+a_2X^{n-2}-a_3X^{n-3}+\cdots+ a_{n-1}X-a_n,
$$
so for any real $x\ge 1$ we have
$$
g(x)> x^{n-3}(a_2x-a_3)+\cdots+(a_{n-1}x-a_n) \ge 0.
$$
Therefore, all the real roots of $f(X)$ (if any) lie in the interval $(-1,0)$.

Moreover, writing $f$ as $f(X)=\prod_{i=1}^{n}(X-\al_i)$, we obtain $a_n=(-1)^n\prod_{i=1}^{n}\al_i$.
Since $a_n\ge 1$, we see that $|\al_i|\ge 1$ for at least one index $i$. So, not all the roots of $f(X)$ lie strictly inside the unit circle. It follows that the largest
in modulus root of $f$ is non-real, and hence
$f\not\in \rA_n(1,H)$.
\end{proof}

We remark that for the polynomial $f$ constructed in the second part of the above proof, some extra conditions can be added to make sure that $f$ only has non-real roots when $n$ is even. For example, if $n\ge 4$ is even, we can further assume that
$$
\min_{1\le i \le n/2} a_{2i} \ge \max_{1\le i \le (n-2)/2} a_{2i+1}.
$$
Then, for any $0<x <1$, we have
\begin{align*}
g(x)&=x^n-a_1x^{n-1}+a_2x^{n-2}+ x^{n-4}(a_4 - a_3 x) + \cdots + (a_n -a _{n-1}x) \\
& > x^n-a_1x^{n-1}+a_2x^{n-2} >0,
\end{align*}
which implies that $f$ has no roots in the interval $(-1,0)$. Thus, $f$ indeed has no real roots.

The next theorem shows that the main contribution in the sum
$\sum_{k=2}^n A_n(k,H)$ considered in
Theorem \ref{thm:moduli2+} comes from $A_n(2,H)$.

\begin{theorem}
\label{thm:moduli3+}
For integers $n\ge 3$ and $H\ge 1$, we have
$$
\sum_{k=3}^nA_n(k,H) \ll H^{n-1}.
$$
\end{theorem}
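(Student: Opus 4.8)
The plan is to show that a monic integer polynomial of degree $n$ having at least three roots of maximal modulus is so constrained that only $O(H^{n-1})$ of them occur among those of height at most $H$. The strategy parallels the first part of the proof of Theorem~\ref{thm:moduli2+}: there, having two roots on the circle $|z|=R$ forced $R^2 \le M(f)$, hence $|a_1| \ll H^{1/2}$, giving the bound $H^{n-1/2}$. Here, having three (or more) roots of maximal modulus should force \emph{two} independent restrictions on the coefficients, each worth a factor of roughly $H^{1/2}$, so that the count drops to $H^{n-1}$. The natural candidates for the two restricted coefficients are $a_1$ and $a_2$: if $|\al_1|=|\al_2|=|\al_3|=R$, then $R^3 \le M(f) \le \sqrt{n+1}\,H$, so $R \ll H^{1/3}$, and since $|a_1| \le nR$ and $|a_2| \le \binom{n}{2}R^2$ we get $|a_1| \ll H^{1/3}$ and $|a_2| \ll H^{2/3}$. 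Counting: $a_1$ ranges over $O(H^{1/3})$ values, $a_2$ over $O(H^{2/3})$ values, and $a_3,\dots,a_n$ freely over $O(H)$ values each, giving $O(H^{1/3} \cdot H^{2/3} \cdot H^{n-2}) = O(H^{n-1})$ polynomials. This handles $A_n(k,H)$ for all $k \ge 3$ simultaneously, since $k \ge 3$ roots of maximal modulus in particular means $|\al_1|=|\al_2|=|\al_3|=R$.

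So the key steps, in order, are: (1) fix a polynomial $f \in \rA_n(k,H)$ with $k \ge 3$ and let $R = |\al_1|$ be the maximal modulus of its roots; (2) observe $|\al_1\al_2\al_3| = R^3$ divides into the Mahler measure, i.e. $R^3 \le M(f)$, and combine with \eqref{Mahler} to get $R \le (\sqrt{n+1}\,H)^{1/3} \ll H^{1/3}$; (3) bound $|a_1| = |s_1(\al_1,\dots,\al_n)| \le nR \ll H^{1/3}$ and $|a_2| = |s_2(\al_1,\dots,\al_n)| \le \binom{n}{2}R^2 \ll H^{2/3}$; (4) sum the number of choices: $\sum_{k=3}^{n} A_n(k,H) \le |\{f \in \rP_n(H) : |a_1| \ll H^{1/3},\ |a_2| \ll H^{2/3}\}| \ll H^{1/3}\cdot H^{2/3}\cdot H^{n-2} = H^{n-1}$.

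One technical point to be careful about is the inequality $R^3 \le M(f)$. By definition $M(f) = \prod_{j=1}^{n}\max\{1,|\al_j|\}$ for a monic $f$. If $R \ge 1$ this is immediate since $\max\{1,|\al_j|\} = |\al_j| = R$ for $j=1,2,3$ and the remaining factors are $\ge 1$. If $R < 1$ then $R^3 < 1 \le H \ll H$ trivially, and in fact in that case all roots lie inside the unit disc so $a_n = \pm\prod \al_i$ would have modulus $< 1$, forcing $a_n = 0$ and $f$ divisible by $X$ — a thin set already covered, but one does not even need this since $R < 1$ makes the coefficient bounds on $a_1, a_2$ automatic. So the estimate $R \ll H^{1/3}$, and hence $|a_1| \ll H^{1/3}$, $|a_2| \ll H^{2/3}$, holds unconditionally.

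I do not expect any serious obstacle here; the argument is a direct strengthening of the two-root case. The only place demanding a little care is making sure the elementary-symmetric-function bounds $|a_1| \le nR$ and $|a_2| \le \binom{n}{2}R^2$ are applied with the roots listed with multiplicity (which is fine, as the excerpt's conventions allow), and noting that the bound produced is $H^{n-1}$ with an implied constant depending only on $n$, as required. It is worth remarking that this is consistent with Theorem~\ref{thm:moduli2+}: the dominant term in $\sum_{k\ge 2} A_n(k,H)$ is of size $H^{n-1/2}$ and comes entirely from $A_n(2,H)$, while everything with $k \ge 3$ is a lower-order $O(H^{n-1})$.
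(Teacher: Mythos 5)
Your argument is correct, but it is genuinely different from the paper's proof. You push the Mahler-measure trick from the first half of the proof of Theorem~\ref{thm:moduli2+} one step further: three roots on the circle $|z|=R$ give $\max\{1,R\}^3\le M(f)\le\sqrt{n+1}\,H$ by \eqref{Mahler}, hence $R\ll H^{1/3}$ (trivially so if $R<1$), and then the elementary symmetric function bounds force $|a_1|\ll H^{1/3}$ and $|a_2|\ll H^{2/3}$, so the union $\bigcup_{k\ge3}\rA_n(k,H)$ sits inside a coefficient box of cardinality $\ll H^{1/3}\cdot H^{2/3}\cdot H^{n-2}=H^{n-1}$; all steps check out, including your handling of the $R<1$ case and of multiplicities. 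The paper instead argues structurally: it splits off reducible polynomials via Lemma~\ref{reducible}, handles irreducible polynomials with a real maximal-modulus root via Ferguson's theorem (Lemma~\ref{Ferguson}, giving $f(X)=g(X^k)$ and only $O(H^{n/3})$ polynomials), and handles the remaining case of two conjugate pairs of maximal modulus via the relation $\al_1\al_2=\al_3\al_4$ and the discriminant-counting Lemma~\ref{lem:pair}. Your route is shorter and more elementary (it needs only \eqref{Mahler}), whereas the paper's route is the one that carries over essentially verbatim to the non-monic analogue (Theorem~\ref{thm:moduli3+*}), where your method breaks down: for non-monic $f$ one only gets $M(f)\ge|a_0|\max\{1,R\}^3$ and $|a_1|\le n|a_0|R$, and when the roots are small the coefficients $a_1,a_2$ are not constrained at all, so no factor is saved. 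For the monic statement at hand, your proof is a perfectly valid and arguably cleaner alternative.
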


\begin{proof}
We choose an arbitrary polynomial $f \in \bigcup_{k=3}^{n} \rA_n(k,H)$.
Suppose first that $f$ is reducible. Then, it follows directly from Lemma \ref{reducible} that the number of these polynomials $f$ is $O(H^{n-1})$.

Next, suppose that $f$ is irreducible.
If $f$ has a real root with maximal modulus (for instance, when $k$ is odd), then by Lemma~\ref{Ferguson}, we know that $f(X)=g(X^k)$ for some $g \in \Z[X]$. So,  the number of such polynomials $f$ is  bounded above by
$$
\sum_{k=3}^{n}(2H+1)^{n/k}\le (n-2)(2H+1)^{n/3} \ll H^{n-2}.
$$

Finally, assume that $f$ has no real roots with maximal modulus, but has at least three roots of maximal modulus.  Then, we must have $n\ge 4$, and so there are at least two pairs of complex conjugate (non-real) roots with maximal modulus, say, the pair $\al_1=\al, \al_2=\overline{\al}$ and another pair $\al_3=\be, \al_4=\overline{\be}$.
Clearly, we have
$$
\al_1\al_2=\al_3\al_4,
$$
and so the desired result follows from Lemma~\ref{lem:pair}.
\end{proof}

Combining Theorems \ref{thm:moduli2+} and \ref{thm:moduli3+}
we conclude that

\begin{corollary}
\label{cor:moduli2}
For integers $n\ge 2$ and $H\ge 1$, we have
$$
H^{n-1/2} \ll A_n(2,H) \ll H^{n-1/2}.
$$
\end{corollary}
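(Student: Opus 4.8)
The plan is to read the result off directly from Theorems~\ref{thm:moduli2+} and~\ref{thm:moduli3+}, so the proof is essentially an exercise in bookkeeping. Every monic integer polynomial of degree $n$ and height at most $H$ has exactly one well-defined number $k\ge 1$ of roots of maximal modulus, so $\sum_{k=1}^n A_n(k,H)=(2H+1)^n$ and in particular
$$
A_n(2,H) = \sum_{k=2}^n A_n(k,H) - \sum_{k=3}^n A_n(k,H),
$$
where for $n=2$ the second sum is empty and the identity reads $A_2(2,H)=\sum_{k=2}^2 A_2(k,H)$. The upper bound is then immediate: all the terms $A_n(k,H)$ are non-negative, so $A_n(2,H)\le \sum_{k=2}^n A_n(k,H)\ll H^{n-1/2}$ by Theorem~\ref{thm:moduli2+}.

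For the lower bound with $n\ge 3$, I would feed in the lower bound $\sum_{k=2}^n A_n(k,H)\gg H^{n-1/2}$ from Theorem~\ref{thm:moduli2+} together with the upper bound $\sum_{k=3}^n A_n(k,H)\ll H^{n-1}$ from Theorem~\ref{thm:moduli3+}, obtaining
$$
A_n(2,H) \ge c_1 H^{n-1/2} - c_2 H^{n-1}
$$
for suitable constants $c_1,c_2>0$ depending only on $n$. Since $H^{n-1}=o(H^{n-1/2})$, for all $H$ exceeding some threshold $H_0=H_0(n)$ the second term is at most $\tfrac12 c_1 H^{n-1/2}$, whence $A_n(2,H)\ge \tfrac12 c_1 H^{n-1/2}$; shrinking the implied constant further if necessary absorbs the finitely many values $H\le H_0$ (for which, incidentally, one may also just note $A_n(2,H)\ge 0$ and that the statement is vacuous up to the constant). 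For $n=2$ no extra work is needed: the claim $H^{3/2}\ll A_2(2,H)\ll H^{3/2}$ is precisely Theorem~\ref{thm:moduli2+} in the case $n=2$, since there the sum $\sum_{k=3}^{n}$ is empty.

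There is no real obstacle in this argument; the only points deserving a word of care are the separate (trivial) treatment of $n=2$, where Theorem~\ref{thm:moduli3+} is neither available nor required, and the routine device of absorbing the lower-order contribution $O(H^{n-1})$ into $H^{n-1/2}$ for large $H$. Both of the genuinely substantive inputs—the $H^{n-1/2}$ barrier coming from Fujiwara/Mahler estimates on $|a_1|$, and the collapse of the tail $\sum_{k\ge 3}$ to $O(H^{n-1})$ via Lemmas~\ref{reducible}, \ref{Ferguson} and~\ref{lem:pair}—have already been established, so here they are simply combined.
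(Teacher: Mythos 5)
Your argument is exactly the paper's: the corollary is stated there as an immediate consequence of combining Theorem~\ref{thm:moduli2+} (upper and lower bounds for $\sum_{k=2}^n A_n(k,H)$) with Theorem~\ref{thm:moduli3+} (the $O(H^{n-1})$ bound on $\sum_{k=3}^n A_n(k,H)$), and your bookkeeping, including the separate trivial case $n=2$ and the absorption of the lower-order term for large $H$, is a correct spelling-out of that step.
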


\subsection{The non-monic case}

In \cite[Theorem 1.4]{DS2014b}, it has been shown that
for any integer $n\ge 2$ we have
$$
\liminf_{H\to \infty}\frac{A_n^*(1,H)}{(2H)^{n+1}}  \ge \frac{1}{3n^2 \sqrt{n+1}}
$$
and
\begin{equation}
\limsup_{H\to \infty}\frac{A_n^*(1,H)}{(2H)^{n+1}}\le
\left\{\begin{array}{ll}
 1-1/(3\cdot 2^{3n/2-1})&\quad\text{if $n$ is even},\\
 1-1/(3 \cdot 2^{(3n+1)/2}) &\quad\text{if $n$ is odd}.\\
\end{array}\right.
\notag
\end{equation}
The above two bounds say that the density of dominant integer polynomials  is positive but does not tend to $1$ as $H \to \infty$. Moreover, it has been conjectured in \cite{DS2014b} that
\begin{equation}
\label{re11}
\limsup_{H\to \infty}\frac{A_n^*(1,H)}{(2H)^{n+1}} > \frac{1}{2},
\end{equation}
which means that at least half of integer polynomials are dominant.

In this section, we show that almost all integer polynomials have exactly one or two roots with maximal modulus, and we also provide a strong evidence for the validity of \eqref{re11}.

The following result implies that the density of integer polynomials with more than two roots of maximal modulus tends to zero as $H \to \infty$.
(It can be viewed as a non-monic version of Theorem~\ref{thm:moduli3+}.)

\begin{theorem}
\label{thm:moduli3+*}
For integers $n\ge 3$ and $H\ge 1$, we have
$$
 \sum_{k=3}^nA_n^*(k,H) \ll H^n.
$$
\end{theorem}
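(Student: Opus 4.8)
The plan is to mimic the structure of the proof of Theorem~\ref{thm:moduli3+}, splitting the polynomials $f\in\bigcup_{k=3}^n\rA_n^*(k,H)$ into the same three cases, but keeping track of the extra free coefficient $a_0$ so that the target bound becomes $H^n$ rather than $H^{n-1}$. First I would dispose of the reducible $f$: by Lemma~\ref{reducible2}, the number of reducible integer polynomials of degree $n\ge 3$ and height at most $H$ is $O(H^n)$, which is already within the claimed bound, so no further work is needed there. (For $n=2$ the statement is vacuous since $\sum_{k=3}^2$ is empty, so we may assume $n\ge 3$ throughout, and in fact the interesting range is $n\ge 4$ as in Theorem~\ref{thm:moduli3+}.)

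Next, for irreducible $f$ having a real root of maximal modulus, I would apply Lemma~\ref{Ferguson} exactly as before: such an $f$ satisfies $f(X)=g(X^k)$ for some $g\in\Z[X]$ with $3\le k\le n$. Writing $f(X)=a_0X^n+\dots+a_n$, the substitution forces all coefficients of $f$ at exponents not divisible by $k$ to vanish, so $f$ is determined by the roughly $n/k+1$ coefficients of $g$; the number of such $f$ with height at most $H$ is $\ll\sum_{k=3}^n H^{n/k+1}\ll H^{n/3+1}$, which is $o(H^n)$ for $n\ge 3$ (it is $\ll H^{n-1}$ once $n\ge 3$). Finally, for irreducible $f$ with no real root of maximal modulus but at least three roots of maximal modulus, I would argue as in Theorem~\ref{thm:moduli3+} that $n\ge 4$ and that there are two distinct conjugate pairs $\al_1=\al,\ \al_2=\overline\al$ and $\al_3=\be,\ \al_4=\overline\be$ on the maximal-modulus circle, whence $\al_1\al_2=|\al|^2=|\be|^2=\al_3\al_4$; the second bound of Lemma~\ref{lem:pair} then gives $\ll H^n$ polynomials of this type.

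Collecting the three contributions $O(H^n)+O(H^{n/3+1})+O(H^n)=O(H^n)$ finishes the proof. The only point that needs a little care is the real-root case: I should make sure the homogeneity bookkeeping under $X\mapsto X^k$ is right, i.e.\ that a degree-$n$ polynomial of the form $g(X^k)$ really is pinned down by $O(n/k)$ integer parameters of size $O(H)$, so that the count is $H^{O(n/k)}$ and not something larger. The genuinely substantive input is Lemma~\ref{lem:pair}, which supplies the $H^n$ bound in the last case; everything else is a routine adaptation of the monic argument with the extra leading coefficient absorbed into the exponent.

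I do not expect a serious obstacle here: the non-monic case is strictly easier than one might fear because Lemma~\ref{reducible2} already absorbs all reducible polynomials into $O(H^n)$, and Lemma~\ref{lem:pair} was evidently designed with exactly this application in mind. The main thing to watch is simply not to lose a power of $H$ anywhere — in particular to confirm that $H^{n/3+1}\ll H^n$ for all $n\ge 3$ (equality of exponents would occur only at $n=3/2$, so this is safe), and that the implied constants depend only on $n$, as required by the conventions fixed in the introduction.
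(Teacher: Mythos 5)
Your proposal is correct and follows essentially the same route as the paper's proof: reducible polynomials are absorbed by Lemma~\ref{reducible2}, the real-maximal-root case is handled via Lemma~\ref{Ferguson} with the count $\ll H^{n/3+1}\ll H^{n-1}$, and the remaining case uses two conjugate pairs on the maximal circle together with the second bound of Lemma~\ref{lem:pair}. No substantive differences to report.
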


\begin{proof}
Since the number of reducible integer polynomials of degree $n\ge 3$ and height at most $H$ is $O(H^n)$ (see, for example, Lemma \ref{reducible2}), we only need to consider those  polynomials contained in $\rA_n^*(k,H)$ that are irreducible. In the rest of the proof, let $f \in \rA_n^*(k,H)$ be an irreducible polynomial.

If $f$ has a real root with maximal modulus (for instance, when $k$ is odd), then by Lemma~\ref{Ferguson}, we know that $f(X)=g(X^k)$ for some $g\in \Z[X]$. So,  the number of such polynomials $f$ is
bounded above by
$$
\sum_{k=3}^{n}(2H+1)^{n/k+1} \le (n-2)(2H+1)^{n/3+1} \ll H^{n-1}.
$$

Now suppose that $f$ has no real roots with maximal modulus.
Then, we must have $n\ge 4$, and there are at least two pairs of
complex conjugate (non-real) roots with maximal modulus, say,
the pair $\al_1=\al, \al_2=\overline{\al}$ and another pair $\al_3=\be, \al_4=\overline{\be}$.
Now, as above we have
$
\al_1\al_2=\al_3\al_4,
$
and so the desired result follows from Lemma~\ref{lem:pair}.
\end{proof}

By considering the polynomials $a_0X^3+a_3$, we have $A_3^*(3,H) \gg H^2$. As an example of a lower bound we will also show that
$$
A_n^*(3,H) \gg H^{n-2}
$$
for any integer $n \ge 4$.
Indeed, consider the polynomials
$$
f(X)=(X^3+8)(a_0X^{n-3}+\cdots + a_{n-4}X + a_{n-3}) \in \Z[X],
$$
where $\lambda_1 H< a_0 < \lambda_2 H $, $\delta_1 H< a_i < \delta_2 H$ for $1\le i \le n-3$, and $0<\delta_1 < \delta_2 < \lambda_1 <\lambda_2 \le 1/9$. Clearly, $H(f)\le H$. Using
Lemma~\ref{Fujiwara}, we know that the moduli of all the roots of the polynomial $a_0X^{n-3}+\cdots + a_{n-4}X + a_{n-3}$ are less than 2. So, $f\in \rA_n^*(3,H)$.  Thus, counting those polynomials $f$ we obtain the lower bound $A_n^*(3,H) \gg H^{n-2}$.

Theorem~\ref{thm:moduli3+*} implies  Theorem~\ref{vienas} asserting that
almost all integer polynomials have exactly one or two roots with maximal modulus.
Indeed,
the number of integer polynomials of degree $n\ge 1$ and height at most $H \ge 1$ is exactly $2H(2H+1)^n=\sum_{k=1}^n A_n^*(k,H)$. Thus,  Theorem~\ref{vienas} follows directly from Theorem~\ref{thm:moduli3+*}.

In the sequel, we use the so-called Probabilistic Galois Theory to give an alternative proof of Theorem~\ref{vienas}, which, although gives weaker estimate on the error term, might be of some interest as well.
All we need to do is to show that for any $3 \le k \le n$, we have
$$
\lim_{H\to \infty} \frac{A_n^*(k,H)}{H^{n+1}} = 0.
$$

Now, assume that $3\le k \le n$. In fact, since the number of reducible integer polynomials of degree $n\ge 3$ and height at most $H$ is $O(H^n)$ (for example, see Lemma~\ref{reducible2}), we only need to consider those polynomials  contained in $\rA_n^*(k,H)$ that are irreducible.

Let $f\in \rA_n^*(k,H)$ be an irreducible polynomial.
If $f$ has a real root with maximal modulus, then by Lemma~\ref{Ferguson}, we know that $f(X)=g(X^k)$ for some $g\in \Z[X]$; so,  the number of such polynomials $f$ is $O(H^{n/k+1})$.

Now suppose that $f$ has no real roots with maximal modulus.  Then, there are at least two pairs of complex conjugate (non-real) roots with maximal modulus, say, the pair $\al_1=\al, \al_2=\overline{\al}$ and another pair $\al_3=\be, \al_4=\overline{\be}$. In particular, this yields $n \geq 4$.
Clearly, we have
\begin{equation}
\label{eq:pair}
\al_1\al_2=\al_3\al_4.
\end{equation}
We claim that $f\in \E_n^*(H)$, where $n \geq 4$.

Assume that $f \notin \E_n^*(H)$. Then, the Galois group of $f$ is $S_n$, so there exists an embedding $\sigma$ such that
$$
\sigma(\al_1)=\al_3,\, \sigma(\al_2)=\al_2,\, \sigma(\al_3)=\al_1,\, \sigma(\al_4)=\al_4.
$$
Together with \eqref{eq:pair}, we get $\al_3\al_2=\al_1\al_4$, and so $\al_1\al_3\al_2^2=\al_1\al_3\al_4^2$. Thus, we find that $\al_2^2=\al_4^2$, so $\al_2=-\al_4$. By a similar argument (interchanging $\al_1$ with $\al_4$), we find that $\al_2=-\al_3$. So, we obtain $\al_3=\al_4$, which is a contradiction. Thus, we must have $f \in \E_n^*(H)$.
Therefore, the desired result follows from Lemma~\ref{En}.

\section{Counting integer polynomials via the locations of their roots}
\label{sec:location}

In this section, we begin with the proof of Theorem~\ref{thm:rs11} asserting that integer polynomials with various locations of their roots have positive density.
It suffices to prove the lower bound.
But the lower bound follows from Lemma~\ref{keylem}, where
$$h(X)=(X-\be_1)\cdots (X-\be_n) \in \R[X]$$
is a separable monic polynomial,
combined with the second inequality of Lemma~\ref{En}. (For $n=2$, all irreducible polynomials have the Galois group $S_2$, so the result follows from Lemma~\ref{reducible2}.)

In conclusion,
we remark that, since it is likely that $A_n^*(1,H)>A_n^*(2,H)$, Theorem \ref{vienas} provides a strong evidence that the conjecture \eqref{re11} may be true. It also naturally implies the following approach.
Notice that for any polynomial having only non-zero roots, a root with maximal (resp. minimal) modulus corresponds to a root with minimal (resp. maximal) modulus of its reciprocal polynomial. So regarding Theorem~\ref{vienas}, we can only consider integer polynomials with exactly $m_1$ roots of minimal modulus and exactly $m_2$ roots of maximal modulus, where $m_1,m_2 \in \{1,2\}$. For integers $n\ge 1,H\ge 1$ and $m_1,m_2\in \{1,2\}$, let $\B_n^*(m_1,m_2;H)$ be the set of such integer polynomials of degree $n$ and height at most $H$, and set
$$
B_n^*(m_1,m_2;H)=|\B_n^*(m_1,m_2;H)|.
$$
Now, Theorem \ref{vienas} can be restated as
$$
\lim_{H \to \infty} \frac{B_n^*(1,1;H)+B_n^*(2,1;H)+B_n^*(1,2;H)+B_n^*(2,2;H)}{2H(2H+1)^{n}} = 1
$$
for any integer $n\ge 2$.

Using reciprocal polynomials, it is easy to see that $B_n^*(2,1;H)=B_n^*(1,2;H)$. So, if one could prove that $B_n^*(1,1;H)>B_n^*(2,2;H)$ (it is likely to be true), then the conjecture \eqref{re11} would be established.

For $n=3$ the conjecture is true, since $B_3^*(2,2;H)=0$.
But in general, it is not easy to compare $B_n^*(1,1;H)$ with $B_n^*(2,2;H)$. The following result implies that, for $n\ge 4$, the growth rate of each of the four quantities $B_n^*(m_1,m_2;H)$ is $H^{n+1}$ with respect to $H$.

\begin{theorem}
\label{thm:B11}
For integers $n\ge 3$ and $H\ge 1$, we have
\begin{align*}
& H^{n+1} \ll B_n^*(1,1;H) \ll H^{n+1},\\
& H^{n+1} \ll B_n^*(2,1;H) \ll H^{n+1}, \\
& H^{n+1} \ll B_n^*(1,2;H) \ll H^{n+1}.
\end{align*}
Furthermore, for integers $n\ge 4$ and $H\ge 1$, we have
\begin{align*}
 H^{n+1} \ll B_n^*(2,2;H) \ll H^{n+1}.
\end{align*}
\end{theorem}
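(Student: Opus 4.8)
The upper bounds require no work: for all $m_1,m_2\in\{1,2\}$ the set $\B_n^*(m_1,m_2;H)$ is contained in the set of all integer polynomials of degree $n$ and height at most $H$, whose cardinality is $2H(2H+1)^n\ll H^{n+1}$. So the content lies in the four lower bounds, and the plan is to derive each of them from Lemma~\ref{keylem} — just as Theorem~\ref{thm:rs11} was derived from it — by applying the lemma to a carefully chosen separable polynomial $h\in\R[X]$ of degree $n$.

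Recall that, given such an $h$ with roots $\be_1,\dots,\be_n$ and a sufficiently small $\varepsilon>0$, Lemma~\ref{keylem} produces $\gg H^{n+1}$ integer polynomials $f$ of degree $n$ and height at most $H$, each having exactly one root in every disc $|z-\be_j|<\varepsilon$, $1\le j\le n$ (and, as the discs are disjoint, these are all of its roots), with that root real precisely when $\be_j$ is real. It is therefore enough to arrange, for a suitable $h$ and $\varepsilon$, that every such $f$ has exactly $m_1$ roots of minimal modulus and exactly $m_2$ roots of maximal modulus. This forces the shape of $h$: its roots of minimal modulus must form either a single real point (when $m_1=1$) or a complex conjugate pair of non-real points (when $m_1=2$), and likewise its roots of maximal modulus with $m_2$ in place of $m_1$; all remaining roots of $h$ should be real, with pairwise distinct moduli lying strictly between the minimal and the maximal one. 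Concretely, writing $p_c(X)=X^2+c^2$ for the real quadratic with two non-real conjugate roots of modulus $c$, one may take $h(X)=\prod_{j=1}^{n}(X-j)$ for $B_n^*(1,1;H)$; $h(X)=p_1(X)\prod_{j=2}^{n-1}(X-j)$ for $B_n^*(2,1;H)$ and $h(X)=p_n(X)\prod_{j=2}^{n-1}(X-j)$ for $B_n^*(1,2;H)$, both with $n\ge3$; and $h(X)=p_1(X)\,p_n(X)\prod_{j=2}^{n-3}(X-j)$ for $B_n^*(2,2;H)$ with $n\ge4$.

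Each of these $h$ is real and separable of degree $n$, so Lemma~\ref{keylem} applies; we then shrink $\varepsilon$ further, below half the least gap between distinct moduli of roots of $h$ (a quantity depending only on $n$, since the configuration is fixed once $n$ is). Let $f$ be any polynomial furnished by the lemma, and for each $j$ let $\al_j$ be its unique root in $|z-\be_j|<\varepsilon$. The choice of $\varepsilon$ guarantees $|\al_a|<|\al_b|$ whenever $|\be_a|<|\be_b|$, so the minimal (resp.\ maximal) modulus among the $\al_j$ is attained exactly at the indices $j$ where $|\be_j|$ is minimal (resp.\ maximal). If that extremal index set is a single real $\be_j$, then $\al_j$ is the only root of $f$ of that modulus; if it is a conjugate pair $\{\be_j,\overline{\be_j}\}$ with $\be_j$ non-real, then $\al_j$ is non-real (by the reality clause of Lemma~\ref{keylem}), its conjugate $\overline{\al_j}$ lies in the disc around $\overline{\be_j}$ and hence equals the corresponding $\al_k$, and $|\al_j|=|\al_k|$, so exactly two roots of $f$ have that modulus. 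Either way $f$ has precisely $m_1$ roots of minimal and $m_2$ roots of maximal modulus, i.e.\ $f\in\B_n^*(m_1,m_2;H)$, whence $B_n^*(m_1,m_2;H)\gg H^{n+1}$.

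The single point needing genuine care is the stability of the modulus pattern under the perturbation; in particular, for $m_i=2$ the two extremal roots of $h$ must be a complex conjugate pair rather than, say, a real pair $\pm c$, since a real pair could drift under the perturbation to two reals of unequal modulus and thereby reduce the count of extremal roots to $1$. The remaining verifications — that the constructions above genuinely give $n$ distinct points, symmetric about the real line, with the prescribed modulus ordering, for the stated ranges of $n$ — are routine.
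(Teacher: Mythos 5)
Your proposal is correct and follows essentially the same route as the paper: trivial upper bounds, and lower bounds obtained by feeding an explicitly constructed separable real polynomial $h$ with the desired pattern of minimal/maximal-modulus roots (a single real root or a non-real conjugate pair at each extreme) into Lemma~\ref{keylem}, with $\varepsilon$ small enough to preserve the modulus ordering. The paper does exactly this (its sample choice for $B_n^*(2,2;H)$ is $h(X)=(X^2+1)(X^2+5)(X^{n-4}+2^{n-4})$), so your argument only differs in the cosmetic choice of $h$ and in spelling out the stability details the paper leaves implicit.
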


\begin{proof}
All upper bounds are trivial.
The lower bounds follow from Theorem~\ref{thm:rs11}. Indeed, for instance, in order to get the last lower bound we can consider the polynomial $h(X)=(X^2+1)(X^2+5)(X^{n-4}+2^{n-4})$. The polynomials $f \in \Z[X]$ described in Lemma~\ref{keylem} will then be in the set $B_n^*(2,2;H)$, because their roots close to $\pm \sqrt{-1}$ will be conjugate complex numbers (with minimal modulus) and also
their roots close to $\pm  \sqrt{-5}$ will be conjugate complex numbers (with maximal modulus),
whereas
the remaining $n-4$ roots will have the moduli close to $2$.
\end{proof}

\section*{Acknowledgements}

The authors are very grateful to the
referee for useful comments and suggestions.  
They would like to thank Yuri Bilu for explaining the result in \cite{Waerden1936} to them. They also want to thank Igor E. Shparlinski and Manas Patra for useful discussions and helpful comments. The second author appreciates Wuxing Cai's valuable comments when he gave a talk on part of this paper at South China University of Technology.
The research of M.~S. was supported by the Australian Research Council Grant DP130100237.

\end{document}